\documentclass[12pt]{amsart}

\usepackage{amsmath,amssymb,amsthm}

\usepackage{booktabs}
\usepackage{enumerate}
\usepackage{hyperref}

\renewcommand{\S}{\mathfrak{S}}
\newcommand{\Sk}{\S_{k+1}}
\newcommand{\C}{W} %originally this was a script C
\newcommand{\dc}{dissimilarity characteristic}

\newcommand{\BB}{B}
\newcommand{\Bx}{\mkern4mu\overline{\mkern-4mu B\mkern-1mu}\mkern 1mu}
% I switched \BB and \Bx
\DeclareMathOperator{\fix}{fix}

\newcommand{\cb}{{\circ\raise 1.6pt\hbox{\vrule width 3.7pt height .6pt}\bullet}}

\newtheorem{theorem}{Theorem}
\newtheorem{lemma}[theorem]{Lemma}
\newtheorem{proposition}[theorem]{Proposition}

\usepackage{tikz}
\usepackage{tkz-graph}
\usetikzlibrary{positioning,calc,decorations.markings}

\newlength{\dcsp}
\setlength{\dcsp}{1mm}

\usepackage{microtype}

\usepackage[letterpaper,margin=1in]{geometry}

\title{Counting unlabeled $k$-trees}
\author{Andrew Gainer-Dewar}
\address{Department of Mathematics, Carleton College, Northfield, MN 55057}
\email{againerdewar@carleton.edu}

\author{Ira M. Gessel}
\address{Department of Mathematics, Brandeis University, MS 050, Waltham, MA 02453}
\email{gessel@brandeis.edu}
\thanks{This work was partially supported by a grant from the Simons Foundation (\#229238 to Ira Gessel).}

\date{2 May 2014}

\begin{document}
\maketitle

\begin{abstract}
  We count unlabeled $k$-trees by properly coloring them in $k+1$ colors and then counting orbits of these colorings under the action of the symmetric group on the colors.
\end{abstract}

\section{Introduction}

\tikzset{
 graphnode/.style = {circle, align=center, inner sep=1pt, minimum size=3mm},
 blacknode/.style = {graphnode, draw=black, fill=black, text=white},
 whitenode/.style = {graphnode, draw=black, fill=white, text=black},
 graphedge/.style = {draw},
 invisible/.style = {draw = none},
 rootedge/.style = {graphedge, ultra thick},
 diredge/.style = {graphedge, ->},
 edgelabel/.style = {rectangle, fill=white, draw},
 every picture/.append style={node distance=1.5cm},
 ->-/.style={decoration={
     markings,
     mark=at position .5 with {\arrow{>}}},
   postaction={decorate}}
}

The class of $k$-trees may be defined recursively: a $k$-tree is either a complete graph on $k$ vertices or a graph obtained from a smaller $k$-tree by adjoining a new vertex together with $k$ edges connecting it to a $k$-clique. Thus a $1$-tree is an ordinary tree. Figure \ref{f-2-tree} shows a 2-tree.

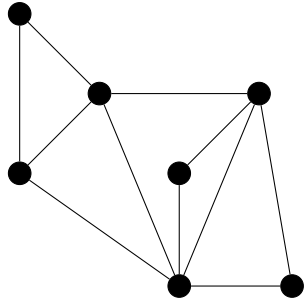
\begin{figure}[htbp] %  figure placement: here, top, bottom, or page
 \centering
 \begin{tikzpicture}
   \node [blacknode] (a) {};

   \node [blacknode, right of = a] (b) {};
   \path [graphedge] (a) edge (b);

   \node [blacknode, above of = a] (c) {};
   \path [graphedge] (a) edge (c);

   \node [blacknode, above right of = c] (d) {};
   \path [graphedge] (a) edge (d) (c) edge (d) (b) edge (d);

   \node [blacknode, above left of = c] (e) {};
   \path [graphedge] (a) edge (e) (d) edge (e);

   \node [blacknode, below left of = e] (f) {};
   \path [graphedge] (a) edge (f) (e) edge (f);

   \node [blacknode, above left of = e] (g) {};
   \path [graphedge] (e) edge (g) (f) edge (g);
 \end{tikzpicture}
 \caption{A 2-tree}
 \label{f-2-tree}
\end{figure}

Labeled $k$-trees may be counted by extensions of the same methods that can be used to count labeled trees. (See, for example, Beineke and Pippert \cite{bp}, Moon \cite{moon}, and Foata \cite{foata}.) However, counting unlabeled $k$-trees is considerably more difficult.

Harary and Palmer \cite{h-p} counted unlabeled 2-trees in 1968 (see also
Harary and Palmer \cite[section 3.5]{hpbook}) and unlabeled 2-trees were counted in a different way, using the theory of combinatorial species, by Fowler et al.\ in \cite{spec}. Many variations of 2-trees have also been counted \cite{solid, outerplanar, h-p,plane, k-gonal, k-gonal2}.

The enumeration of unlabeled $k$-trees for $k>2$ was a long-standing unsolved problem until the recent solution by Gainer-Dewar \cite{gainer}, also using the theory of combinatorial species.
We present here an alternative approach which results in a simpler description of the generating function for unlabeled $k$-trees; this is given in Theorem \ref{thm:main}.
The asymptotic growth of the number of $k$-trees has been analyzed by Drmota and Jin in \cite{drmota} using our results.

Table \ref{ta-k-trees}  gives the number $K_{n,k}$ of $k$-trees with $n+k$ vertices for small values of $n$ and $k$; larger tables can be found in \cite{gainer}.
The stability of these numbers for fixed $n$ as $k$ increases and a relation concerning those stable numbers will be explained in section \ref{s-coding}; these ``stable $k$-tree numbers'' are shown in the last row of the table.
These sequences are given to many more terms as \cite[A000055, A054581, A078792, A078793, A201702, A224917]{oeis} (for $k = 1, 2, 3, 4, 5$ and the stable $k$-tree numbers respectively).
\begin{table}[ht]
 \centering
 \begin{tabular}{l | *{10}r}
   \toprule
   $k \backslash n$ & 0 & 1 & 2 & 3 & 4 & 5 & 6 & 7 & 8 & 9 \\
   \midrule
   1 & 1 & 1 & 1 & 2 & 3 & 6 & 11 & 23 & 47 & 106 \\
   2 & 1 & 1 & 1 & 2 & 5 & 12 & 39 & 136 & 529 & 2171 \\
   3 & 1 & 1 & 1 & 2 & 5 & 15 & 58 & 275 & 1505 & 9003 \\
   4 & 1 & 1 & 1 & 2 & 5 & 15 & 64 & 331 & 2150 & 15817 \\
   5 & 1 & 1 & 1 & 2 & 5 & 15 & 64 & 342 & 2321 & 18578 \\
   \midrule
   ``stable'' & 1 & 1 & 1 & 2 & 5 & 15 & 64 & 342 & 2344 & 19137 \\
   \bottomrule
 \end{tabular}
 \caption{The number of $k$-trees with $n+k$ vertices}
 \label{ta-k-trees}
\end{table}

The usual approach to counting unlabeled trees or tree-like graphs consists of two steps. (See, for example, Bergeron, Labelle, and Leroux \cite[chapters 3 and 4]{bll}, and Harary and Palmer \cite[chapter 3]{hpbook}.)

First, one converts the problem to one of counting certain rootings of these graphs.
To do this, Otter \cite{otter} introduced the method of  ``dissimilarity characteristic theorems'' in counting  unlabeled trees. His theorem relates the numbers of orbits of vertices and edges under the automorphism group of the tree.   We will instead use a ``dissymmetry theorem'' in the style of those introduced by Leroux \cite{leroux} (see also Leroux and Miloudi \cite{l-m}); these describe isomorphisms of species and thus may be used to study labeled and unlabeled structures simultaneously.
(We will discuss  dissimilarity characteristic theorems for trees and 2-trees in sections \ref{sec:counttrees} and \ref{sec:count2trees}.)

Next, once the problem is reduced through a dissymmetry or dissimilarity characteristic theorem to one of counting rooted graphs, one finds recursive decompositions for the rooted graphs that give functional equations for their generating functions.

A dissymmetry theorem for $k$-trees is fairly straightforward, and it reduces the problem to counting $k$-trees rooted at a $(k+1)$-clique, $k$-trees rooted at a $k$-clique, and $k$-trees rooted at both a $(k+1)$-clique and a $k$-clique that it contains. (The \dc\ approach  to $k$-trees is more complicated.)

For classical (1-)trees, the decomposition step is easy: removing the root  of a vertex-rooted tree yields a set of trees, each rooted at the vertex which was adjacent to the original root, and similar decompositions are available for trees rooted in other ways.
However, for $k > 1$, this straightforward procedure is no longer sufficient.
For example, consider the two distinct edge-rooted $2$-trees of Figure \ref{f-edge-rooted}.
\begin{figure}[htbp] %  figure placement: here, top, bottom, or page
 \centering
 \begin{tikzpicture}
   \node [blacknode] (a) {};

   \node [blacknode, above left of = a] (b) {};
   \path [graphedge] (a) edge (b);

   \node [blacknode, above right of = a] (c) {};
   \path [graphedge] (a) edge (c);

   \node [blacknode, above right of = b] (d) {};
   \path [rootedge] (a) edge (d);
   \path [graphedge] (b) edge (d) (c) edge (d);

   \node [blacknode, above of = b] (e) {};
   \path [graphedge] (b) edge (e) (d) edge (e);

   \node [blacknode, above of = c] (f) {};
   \path [graphedge] (c) edge (f) (d) edge (f);
 \end{tikzpicture}
 \qquad
 \begin{tikzpicture}
   \node [blacknode] (a) {};

   \node [blacknode, above left of = a] (b) {};
   \path [graphedge] (a) edge (b);

   \node [blacknode, above right of = a] (c) {};
   \path [graphedge] (a) edge (c);

   \node [blacknode, above right of = b] (d) {};
   \path [rootedge] (a) edge (d);
   \path [graphedge] (b) edge (d) (c) edge (d);

   \node [blacknode, below of = b] (e) {};
   \path [graphedge] (b) edge (e) (a) edge (e);

   \node [blacknode, above of = c] (f) {};
   \path [graphedge] (c) edge (f) (d) edge (f);
 \end{tikzpicture}
 \caption{Two edge-rooted 2-trees}
 \label{f-edge-rooted}
\end{figure}
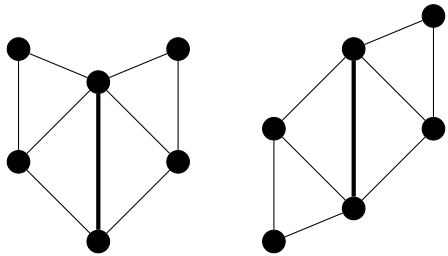

We can decompose each of these $2$-trees by first separating the triangles containing the root edge, and then from each of these triangles, removing the root edge and separating the two remaining edges, as shown in Figure \ref{f-decomp}.

\begin{figure}[htbp] %  figure placement: here, top, bottom, or page
 \centering
 \begin{tikzpicture}
   \node [invisible] (a) {};

   \node [invisible, above left = of a] (b) {};

   \node [invisible, above right = of a] (c) {};

   \node [invisible, above right = of b] (d) {};
   \path [rootedge, dashed] (a) edge (d);

   \node [blacknode, left = \dcsp of a] (a1) {};

   \node [blacknode, below = \dcsp of b] (b1) {};
   \path [rootedge] (a1) edge (b1);

   \node [blacknode, above = \dcsp of b] (b2) {};

   \node [blacknode, left = \dcsp of d] (d1) {};
   \path [rootedge] (b2) edge (d1);

   \node [blacknode, above = of b1] (e1) {};
   \path [graphedge] (b2) edge (e1) (d1) edge (e1);

   \node [blacknode, right = \dcsp of a] (a2) {};

   \node [blacknode, below = \dcsp of c] (c1) {};
   \path [rootedge] (a2) edge (c1);

   \node [blacknode, above = \dcsp of c] (c2) {};

   \node [blacknode, right = \dcsp of d] (d2) {};
   \path [rootedge] (c2) edge (d2);

   \node [blacknode, above = of c1] (e2) {};
   \path [graphedge] (c2) edge (e2) (d2) edge (e2);
 \end{tikzpicture}
 \qquad
 \begin{tikzpicture}
   \node [invisible] (a) {};

   \node [invisible, above left = of a] (b) {};

   \node [invisible, above right = of a] (c) {};

   \node [invisible, above right = of b] (d) {};
   \path [rootedge, dashed] (a) edge (d);

   \node [blacknode, left = \dcsp of a] (a1) {};

   \node [blacknode, below = \dcsp of b] (b1) {};
   \path [rootedge] (a1) edge (b1);

   \node [blacknode, above = \dcsp of b] (b2) {};

   \node [blacknode, left = \dcsp of d] (d1) {};
   \path [rootedge] (b2) edge (d1);

   \node [blacknode, below = of b2] (e1) {};
   \path [graphedge] (b1) edge (e1) (a1) edge (e1);

   \node [blacknode, right = \dcsp of a] (a2) {};

   \node [blacknode, below = \dcsp of c] (c1) {};
   \path [rootedge] (a2) edge (c1);

   \node [blacknode, above = \dcsp of c] (c2) {};

   \node [blacknode, right = \dcsp of d] (d2) {};
   \path [rootedge] (c2) edge (d2);

   \node [blacknode, above = of c1] (e2) {};
   \path [graphedge] (c2) edge (e2) (d2) edge (e2);
 \end{tikzpicture}
 \caption{Decomposition of  edge-rooted 2-trees}
 \label{f-decomp}
\end{figure}

We would like to recover the original $2$-trees from these collections of components.
However, it will not suffice (as in the case of $1$-trees) to treat each collection of components as simply a set; in particular, this would fail to distinguish between the two $2$-trees in the example.
In fact, each pair of 2-trees is  ordered (so the two decompositions in Figure~\ref{f-decomp} really are different), but switching the order of all pairs simultaneously does not change the 2-tree.

To deal with this problem, we may orient the root edge.
Extending the orientation of the root edge to an orientation of the triangles containing it orients the root edges of the component 2-trees, as shown in Figure  \ref{f-oriented}.

\begin{figure}[htbp] %  figure placement: here, top, bottom, or page
 \centering
 \begin{tikzpicture}
   \node [invisible] (a) {};

   \node [invisible, above left = of a] (b) {};

   \node [invisible, above right = of a] (c) {};

   \node [invisible, above right = of b] (d) {};
   \path [rootedge, diredge, dashed] (a) edge (d);

   \node [blacknode, left = \dcsp of a] (a1) {};

   \node [blacknode, below = \dcsp of b] (b1) {};
   \path [rootedge, diredge] (b1) edge (a1);

   \node [blacknode, above = \dcsp of b] (b2) {};

   \node [blacknode, left = \dcsp of d] (d1) {};
   \path [rootedge, diredge] (d1) edge (b2);

   \node [blacknode, above = of b1] (e1) {};
   \path [graphedge] (b2) edge (e1) (d1) edge (e1);

   \node [blacknode, right = \dcsp of a] (a2) {};

   \node [blacknode, below = \dcsp of c] (c1) {};
   \path [rootedge, diredge] (c1) edge (a2);

   \node [blacknode, above = \dcsp of c] (c2) {};

   \node [blacknode, right = \dcsp of d] (d2) {};
   \path [rootedge, diredge] (d2) edge (c2);

   \node [blacknode, above = of c1] (e2) {};
   \path [graphedge] (c2) edge (e2) (d2) edge (e2);
 \end{tikzpicture}

 \caption{Decomposition of  edge-rooted 2-trees}
 \label{f-oriented}
\end{figure}
Then 2-trees rooted at an oriented edge can be counted easily. The two-element group acts on  these oriented rooted 2-trees by reversing  the orientation, and 2-trees rooted at an unoriented edge are obtained as orbits under this group action.
%\gray{which requires some additional work, but it can be carried out without too much difficulty.}
This is essentially the approach taken by Fowler et al.~\cite{spec}. Gainer-Dewar \cite{gainer} took a similar approach to counting $k$-trees by cyclically orienting the $(k+1)$-cliques in a $k$-tree. This is significantly more complicated than in the case of 2-trees, since there is no simple analogue in this context of reversing the cyclic order of a triangle.

Here we take a somewhat different, though related approach. We color  the vertices of a $k$-tree in  $k+1$ colors, with all the vertices in each $(k+1)$-clique  colored differently. This coloring breaks the symmetry of the $k$-tree, allowing the decomposition to work. Then we take orbits under the symmetric group $\Sk$ acting on colors.

%
% We describe here a simpler approach to counting unlabeled $k$-trees that is closely related to that of \cite{gainer}.

%
% The basic idea is that we encode unlabeled $k$-trees as orbits under the symmetric group $\Sk$ of certain kinds of colored (but still unlabeled) trees.  We count these orbits using Burnside's lemma, so all that we need to do is count the unlabeled trees fixed by each element of $\Sk$. The actual calculations are very similar to those of \cite{gainer} though the justification is somewhat different (though still related).

\section{Enumerative lemmas}

In this section we describe  two enumerative tools that we will need for counting $k$-trees. The reader may  skip this section for now and come back to it when it is needed.

Suppose that a finite group $G$ acts on a weighted set $S$. We do not require  $S$ to be finite, but we do require that the sum of the weights of all the elements of $S$ is well-defined as a formal power series, and we require that weights are constant on orbits. Thus we may define the weight of an orbit to be the weight of any of its elements.
For each $g\in G$ we denote by $\fix(g)$ the sum of the weights of the elements of $S$ fixed by $g$.
Then Burnside's lemma (also called the Cauchy-Frobenius theorem) asserts that the sum of the weights of the orbits of $G$ is equal to
\begin{equation}
 \label{e-burnside}
 \frac{1}{|G|}\sum_{g\in G}\fix(g).
\end{equation}

Since $\fix(g)$ depends only on the conjugacy class of $g$, the sum in \eqref{e-burnside} may be rewritten as a sum over conjugacy classes. We will be applying Burnside's lemma to the case of a symmetric group $\S_m$, where the conjugacy classes correspond to cycle types, which may be described by partitions of $m$: a permutation with $l_i$ cycles of length $i$ for each $i$ corresponds to the partition in which the multiplicity of $i$ as a part is  $l_i$.
The number of permutations of cycle type $\lambda=(1^{l_1} 2^{l_2}\cdots m^{l_m})$ is $m!/z_\lambda$, where
$z_\lambda=1^{l_1}l_1!\, 2^{l_2}l_2!\cdots m^{l_m} l_m!$. Using the notation $\lambda\vdash m$ to mean that $\lambda$ is a partition of  $m$, we may restate Burnside's lemma for symmetric groups in the following way.

\begin{lemma}
 \label{l-burnside}
 Let the symmetric group $\S_m$ act on the weighted set $S$ so that weights are constant on orbits. For each partition $\lambda$ of $m$ let $f_\lambda$ be the sum of the weights of the elements of $S$ fixed by a permutation of cycle type $\lambda$.
 Then the sum of the weights of the orbits of $S$ under $\S_m$ is
\(
  \sum_{\lambda\vdash m} f_\lambda/z_\lambda.
\)
\end{lemma}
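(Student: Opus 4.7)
The plan is to deduce Lemma \ref{l-burnside} directly from the form of Burnside's lemma already stated in equation \eqref{e-burnside}, by regrouping the sum over $g \in \S_m$ according to conjugacy class.

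First I would note that for any $g \in \S_m$ and $h \in \S_m$, the map $s \mapsto h \cdot s$ gives a bijection between the fixed-point set of $g$ and the fixed-point set of $hgh^{-1}$; since the action preserves weights, this shows that $\fix(g)$ depends only on the conjugacy class of $g$. The conjugacy classes of $\S_m$ are indexed by cycle types, i.e.\ by partitions $\lambda \vdash m$, so $\fix(g) = f_\lambda$ whenever $g$ has cycle type $\lambda$. Thus the sum in \eqref{e-burnside} can be rewritten as
\[
 \frac{1}{m!} \sum_{\lambda \vdash m} N_\lambda \, f_\lambda,
\]
where $N_\lambda$ is the number of permutations in $\S_m$ of cycle type $\lambda$.

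The remaining step is to invoke the standard formula $N_\lambda = m!/z_\lambda$ for $\lambda = (1^{l_1} 2^{l_2} \cdots m^{l_m})$ with $z_\lambda = \prod_i i^{l_i} l_i!$, which follows from counting: there are $m!$ ways to place the symbols $1,\dots,m$ into a template of cycles with the prescribed lengths, and each permutation is obtained $\prod_i i^{l_i} l_i!$ times, once for every cyclic rotation of each cycle and every permutation of the $l_i$ cycles of equal length $i$. Substituting into the displayed expression above, the factors of $m!$ cancel and we obtain $\sum_{\lambda \vdash m} f_\lambda/z_\lambda$, as claimed.

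There is no real obstacle here; the lemma is a cosmetic rewriting of Burnside's lemma, and the only non-trivial ingredient is the conjugacy class size formula, which is classical. The value of stating it in this form is notational convenience for the applications in later sections, where the cycle-type bookkeeping is exactly what is needed to handle symmetric-group actions on colorings.
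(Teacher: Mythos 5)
Your proposal is correct and follows exactly the route the paper takes: the paper presents this lemma as a restatement of Burnside's lemma \eqref{e-burnside}, obtained by grouping the sum over $\S_m$ into conjugacy classes and using the class-size formula $m!/z_\lambda$. You merely fill in the standard details (conjugacy invariance of $\fix$ and the counting argument for $z_\lambda$) that the paper leaves implicit.
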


For our second lemma, we return to the general case of a finite group $G$ acting on a weighted set $S$ (though we will only apply it to symmetric groups).  We now assume that each weight is a product of powers of variables. (In our application each weight will be a power of $x$.)

Let $M(S)$ be the set of multisets of elements of $S$. The action of $G$ on $S$ extends naturally to an action on $M(S)$. We define the weight of a multiset in $M(S)$ to be the product of the weights of its elements. As before, for any $g\in G$, we denote by $\fix(g)$ the sum of the weights of the elements of $S$ fixed by $g$. For any formal power series $u$ in the variables that occur in the weights, we denote by $p_n[u]$  the result of replacing each variable in $u$ by its $n$th power.

\begin{lemma}
 \label{l-ms}
 Let $g$ be an element of $G$. Then the sum of the weights of the elements of $M(S)$ fixed by $g$ is
 \begin{equation}
   \label{e-th1}
   \exp\biggl(\sum_{m=1}^\infty   \frac{p_m[\fix(g^{m})]}{m} \biggr).
 \end{equation}
\end{lemma}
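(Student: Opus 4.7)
The plan is to reduce the problem to an infinite product indexed by the cycles of $g$ on $S$, and then verify that the logarithm of this product agrees with the exponent in \eqref{e-th1}.

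First, I would decompose $S$ into the cycles (orbits) of the cyclic group $\langle g\rangle$. A multiset $M\in M(S)$ is fixed by $g$ if and only if, within each $g$-cycle $C$, every element of $C$ appears in $M$ with the same multiplicity: indeed, if $C=\{s_1,\dots,s_m\}$ with $g(s_i)=s_{i+1}$ (indices mod $m$), then $g$ fixing $M$ forces the multiplicity of $s_i$ and $s_{i+1}$ in $M$ to agree for every $i$. Since each $g$-cycle is contained in a single $G$-orbit and weights are constant on $G$-orbits, all elements of $C$ share a common monomial weight, which I will denote $w_C$. Thus a $g$-fixed multiset amounts to choosing a nonnegative integer $k_C$ for each $g$-cycle $C$, and the weight of the resulting multiset is $\prod_C w_C^{|C|k_C}$. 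Summing over all choices, the total weight of the $g$-fixed multisets factors as
\begin{equation*}
\prod_{C}\frac{1}{1-w_C^{|C|}},
\end{equation*}
where the product ranges over the $g$-cycles $C$ on $S$.

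Next I would take logarithms, obtaining
\begin{equation*}
\sum_{C}\sum_{j\ge 1}\frac{w_C^{j|C|}}{j},
\end{equation*}
and match this against the exponent in \eqref{e-th1}. The key observation here is that an element $s\in S$ is fixed by $g^m$ precisely when the length of its $g$-cycle divides $m$; therefore
\begin{equation*}
\fix(g^m)=\sum_{C:\,|C|\,\mid\, m}|C|\,w_C,
\end{equation*}
and applying $p_m$ replaces $w_C$ by $w_C^{m}$. Substituting into $\sum_{m\ge 1}p_m[\fix(g^m)]/m$ and interchanging the order of summation by writing $m=|C|j$ collapses the expression to $\sum_C\sum_{j\ge 1}w_C^{|C|j}/j$, matching the logarithm above.

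The only place the argument uses anything nontrivial is the careful bookkeeping in the swap of summation: each cycle $C$ contributes to $\fix(g^m)$ for every multiple $m$ of $|C|$, and the factor of $|C|$ coming from the size of the cycle cancels with the $1/m$ after the reparametrization $m=|C|j$. This is the main (though routine) obstacle; the rest is essentially a factorization over independent cycles.
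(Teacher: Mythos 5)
Your proof is correct and follows essentially the same route as the paper's: both decompose $S$ into the cycles of $g$, observe that a fixed multiset assigns a common multiplicity to each cycle so the fixed-weight sum is a product of geometric series $\prod_C (1-w_C^{|C|})^{-1}$, and then match this with the exponential via the identity $-\log(1-w^n)=\sum_j w^{nj}/j$ together with the fact that a cycle of length $|C|$ contributes $|C|\,w_C$ to $\fix(g^m)$ exactly when $|C|$ divides $m$. The only cosmetic difference is that the paper reduces to a single orbit (checking multiplicativity of both sides) before comparing, while you take the logarithm globally and interchange the sums.
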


\begin{proof}
 Let $F_g(S)$ be the sum of the weights of the elements of $M(S)$ fixed by $g$ and let $E_g(S)$ be the expression in \eqref{e-th1}.
 A multiset of elements of $S$ is fixed by $g$ if and only if it is a multiset union of orbits of $S$ under $g$ (i.e., orbits of the subgroup of $G$ generated by $g$). Thus  $F_g(S) = \prod_O F_g(O)$
 where $O$ runs over the  orbits of $S$ under $g$, and it is easy to check that $E_g(S)=\prod_O E_g(O)$.

 Thus it is sufficient to prove the lemma in the case in which $g$ acts transitively on $S$. We may assume without loss of generality that $g$ acts as an $n$-cycle  on the $n$-element set $S$, where every weight is $x$. Then $\fix(g^m) = nx$ if $n$ divides $m$ and  $\fix(g^m) = 0$  otherwise. In this case we have
 \begin{equation*}
   E_g(S) = \exp\biggl(\sum_{i=1}^\infty   \frac{p_{ni}[\fix(g^{ni})]}{ni} \biggr)
   = \exp\biggl(\sum_{i=1}^\infty   \frac{x^{ni}}{i} \biggr)
   =\frac{1}{1-x^n}=F_g(S).\qedhere
 \end{equation*}
\end{proof}

% \comment{This might be  a special case of what Harary and Palmer call the ``power group enumeration theorem,'' though I'm not completely sure (and somewhat doubtful).}

% Here's a sketch of another proof, which is more complicated (and doesn't need to be included).
% We consider $n$-tuples $(s_1,\cdots, s_n)$ of elements of $S$. Then $G$ acts on these $n$-tuples by acting on their entries, and
% the symmetric group $\S_n$ acts on them by permuting the coordinates, and these two actions commute. Thus $G$ acts on the orbits under $\S_n$, and we want to count these $\S_n$-orbits that are fixed by $g$.
%
% We want to count orbits under $\S_n$ that are fixed by $G$. To do this, using the generalized Burnside theorem, it is sufficient to count elements $S^n$ fixed by all pairs $(\sigma, g)$ for $\sigma\in \S_n$, and it's not too hard to do this.
%
% My original proof (which at least led me to the formula) was much more complicated and used M\"obius inversion. Only in retrospect is it clear that it's closely related to the formula for composition of $\Gamma$-cycle indices.

\section{Coding Trees}
\label{s-coding}

Following the   terminology introduced in \cite{gainer}, we call a $(k+1)$-clique in a $k$-tree a \emph{hedron} and we call a $k$-clique in a $k$-tree a \emph{front}. We may think of a $k$-tree as made up of hedra joined along fronts. A $k$-tree with $n$ hedra has $n+k$ vertices, so we will count $k$-trees by the number of hedra rather than the number of vertices.

A \emph{colored} $k$-tree is a $k$-tree in which the vertices are colored in the colors 1, 2, \dots, $k+1$ so that adjacent vertices are colored differently. Thus the $k+1$ vertices of any hedron are colored with  all $k+1$ colors, and the $k$ vertices of any front are colored in all but one of the colors. It is not hard to see that a coloring of a $k$-tree is determined by its restriction to any one of its hedra. (See Figure \ref{f-1}.)
\begin{figure}[htbp] %  figure placement: here, top, bottom, or page
 \centering
 \begin{tikzpicture}
   \node [whitenode] (a) {$1$};

   \node [whitenode, below right = of a] (b) {$2$};
   \path [graphedge] (a) edge (b);

   \node [whitenode, above right = of a] (c) {$2$};
   \path [graphedge] (a) edge (c);

   \node [whitenode, above right = of b] (d) {$3$};
   \path [graphedge] (a) edge (d) (b) edge (d) (c) edge (d);

   \node [whitenode, above = of c] (e) {$2$};
   \path [graphedge] (a) edge (e) (d) edge (e);

   \node [whitenode, right = of e] (f) {$1$};
   \path [graphedge] (e) edge (f) (d) edge (f);
 \end{tikzpicture}

 \caption{A colored $2$-tree}
 \label{f-1}
\end{figure}
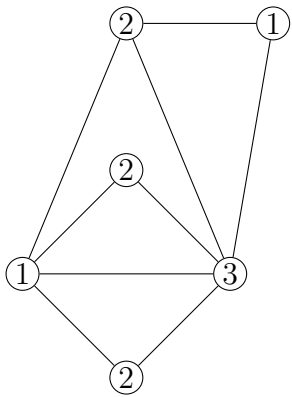
Then the symmetric group $\Sk$ acts on colored $k$-trees by permuting the colors, and  $k$-trees may be identified with orbits of colored $k$-trees under this action.

We will label the hedra of  $k$-trees. More precisely, a \emph{colored hedron-labeled $k$-tree} with hedron-label set $L$ is a colored $k$-tree together with a bijection from the set of its hedra to $L$. It is not hard to see that the only automorphism of a colored hedron-labeled $k$-tree that preserves hedra and colors is the  identity automorphism. (This is because any vertex  is determined by its color and a hedron that contains it.) Therefore we may ignore the labels on the vertices.

We will encode colored hedron-labeled $k$-trees by certain (ordinary) trees that we call \emph{coding trees} (or \emph{$k$-coding trees} if $k$ needs to be specified). Coding trees have two kinds of vertices: black vertices and colored vertices. Every edge joins a black vertex and a colored vertex. Each colored vertex is colored in one of the colors 1, 2, \dots, $k+1$ but is otherwise unlabeled, and the black vertices are labeled. (The colored vertices correspond to the white vertices of \cite{gainer}, which uses a related but different construction under the same name.) Every black vertex has $k+1$ colored neighbors, one of each color, but a colored vertex may have any number of neighbors.

To construct a coding tree from  a colored $k$-tree,  we first color each front of the $k$-tree with the unique color not assigned to any of its vertices. The coding tree then has a black vertex for each  hedron of the $k$-tree (from which it takes its label) and a colored vertex for each front of the $k$-tree (from which it takes its color), and a black vertex is adjacent to a colored vertex if and only if the corresponding hedron contains the corresponding front.

Figure \ref{f-two-2-trees} shows colored versions of the $2$-trees previously shown in Figure~\ref{f-edge-rooted}.
\begin{figure}[htbp]
 \centering
 \begin{tikzpicture}[node distance=2.5cm]
   \node [blacknode] (a) {};

   \node [blacknode, above left = of a] (b) {};
   \path [graphedge] (a) edge node [edgelabel] {$1$} (b);

   \node [blacknode, above right = of a] (c) {};
   \path [graphedge] (a) edge node [edgelabel] {$1$} (c);

   \node [blacknode, above right = of b] (d) {};
   \path [graphedge] (a) edge node [edgelabel] {$2$} (d) (b) edge node [edgelabel] {$3$} (d) (c) edge node [edgelabel] {$3$} (d);

   \node [blacknode, above = of b] (e) {};
   \path [graphedge] (b) edge node [edgelabel] {$1$} (e) (d) edge node [edgelabel] {$2$} (e);

   \node [blacknode, above = of c] (f) {};
   \path [graphedge] (c) edge node [edgelabel] {$1$} (f) (d) edge node [edgelabel] {$2$} (f);
 \end{tikzpicture}
 \qquad
 \begin{tikzpicture}[node distance=2.5cm]
   \node [blacknode] (a) {};

   \node [blacknode, above left = of a] (b) {};
   \path [graphedge] (a) edge node [edgelabel] {$1$} (b);

   \node [blacknode, above right = of a] (c) {};
   \path [graphedge] (a) edge node [edgelabel] {$1$} (c);

   \node [blacknode, above right = of b] (d) {};
   \path [graphedge] (a) edge node [edgelabel] {$2$} (d) (b) edge node [edgelabel] {$3$} (d) (c) edge node [edgelabel] {$3$} (d);

   \node [blacknode, below = of b] (e) {};
   \path [graphedge] (b) edge node [edgelabel] {$3$} (e) (a) edge node [edgelabel] {$2$} (e);

   \node [blacknode, above = of c] (f) {};
   \path [graphedge] (c) edge node [edgelabel] {$1$} (f) (d) edge node [edgelabel] {$2$} (f);
 \end{tikzpicture}

 \caption{Two colored $2$-trees}
 \label{f-two-2-trees}
\end{figure}
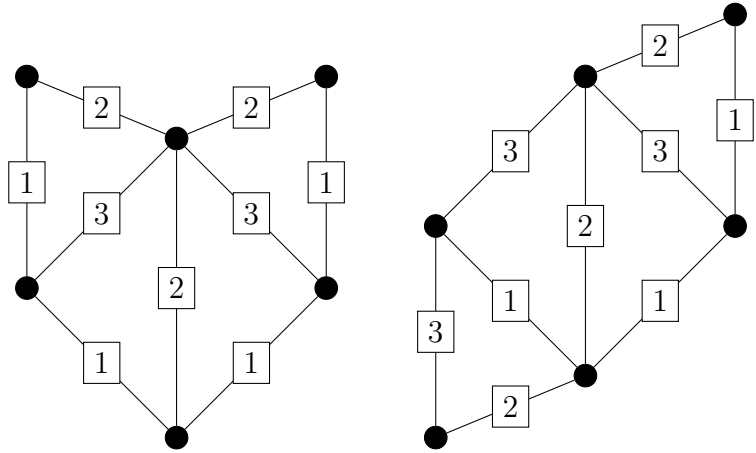
Their associated $2$-coding trees may be seen to be distinct in Figure \ref{f-two-coding}.
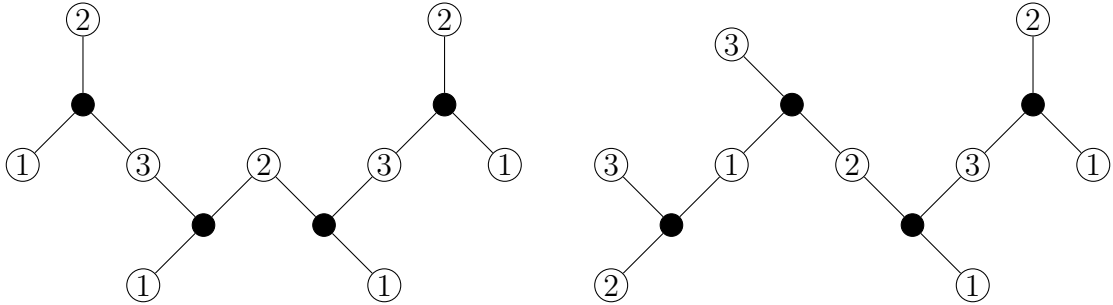
\begin{figure}[htbp]
 \centering
 \begin{tikzpicture}[node distance=.75cm]
   \node [whitenode] (a) {$2$};

   \node [blacknode, below left = of a] (1) {};
   \path [graphedge] (a) edge (1);

   \node [whitenode, below left = of 1] (b) {$1$};
   \path [graphedge] (b) edge (1);

   \node [whitenode, above left = of 1] (c) {$3$};
   \path [graphedge] (c) edge (1);

   \node [blacknode, above left = of c] (2) {};
   \path [graphedge] (c) edge (2);

   \node [whitenode, below left = of 2] (d) {$1$};
   \path [graphedge] (d) edge (2);

   \node [whitenode, above = of 2] (e) {$2$};
   \path [graphedge] (e) edge (2);

   \node [blacknode, below right = of a] (3) {};
   \path [graphedge] (a) edge (3);

   \node [whitenode, below right = of 3] (f) {$1$};
   \path [graphedge] (f) edge (3);

   \node [whitenode, above right = of 3] (g) {$3$};
   \path [graphedge] (g) edge (3);

   \node [blacknode, above right = of g] (4) {};
   \path [graphedge] (g) edge (4);

   \node [whitenode, below right = of 4] (h) {$1$};
   \path [graphedge] (h) edge (4);

   \node [whitenode, above = of 4] (i) {$2$};
   \path [graphedge] (i) edge (4);
 \end{tikzpicture}
 \qquad
 \begin{tikzpicture}[node distance=.75cm]
   \node [whitenode] (a) {$2$};

   \node [blacknode, above left = of a] (1) {};
   \path [graphedge] (a) edge (1);

   \node [whitenode, below left = of 1] (b) {$1$};
   \path [graphedge] (b) edge (1);

   \node [whitenode, above left = of 1] (c) {$3$};
   \path [graphedge] (c) edge (1);

   \node [blacknode, below left = of b] (2) {};
   \path [graphedge] (b) edge (2);

   \node [whitenode, below left = of 2] (d) {$2$};
   \path [graphedge] (d) edge (2);

   \node [whitenode, above left = of 2] (e) {$3$};
   \path [graphedge] (e) edge (2);

   \node [blacknode, below right = of a] (3) {};
   \path [graphedge] (a) edge (3);

   \node [whitenode, below right = of 3] (f) {$1$};
   \path [graphedge] (f) edge (3);

   \node [whitenode, above right = of 3] (g) {$3$};
   \path [graphedge] (g) edge (3);

   \node [blacknode, above right = of g] (4) {};
   \path [graphedge] (g) edge (4);

   \node [whitenode, below right = of 4] (h) {$1$};
   \path [graphedge] (h) edge (4);

   \node [whitenode, above = of 4] (i) {$2$};
   \path [graphedge] (i) edge (4);
 \end{tikzpicture}

 \caption{The corresponding coding trees}
 \label{f-two-coding}
\end{figure}

It is not hard to see that this encoding is a bijection from colored hedron-labeled $k$-trees to $k$-coding trees; the proof is essentially identical to that of Theorem 3.4 of~\cite{gainer}.

% \comment{The next paragraph should perhaps come earlier.}
The symmetric group $\S_n$ acts on colored hedron-labeled  $k$-trees with hedron-label set $[n]$ by permuting the labels of the hedra, and $\S_n$ acts on $k$-coding trees with black vertex set $[n]$ by permuting the labels of the black vertices, and these actions are compatible with the encoding of   colored $k$-trees as coding trees.
The orbits under the action of $\S_n$ are \emph{unlabeled}  colored $k$-trees and coding trees.
The symmetric group $\Sk$ acts compatibly on colored $k$-trees and on coding trees by permuting the colors. These  actions of $\S_n$ and $\Sk$ commute, so $\Sk$ acts compatibly\footnote{This `equivariance' of the action of $\Sk$ can also be exploited for species-theoretic analysis; cf.~\cite{gainer}.} on unlabeled colored $k$-trees and on coding trees, and the orbits of unlabeled colored $k$-trees, which are simply unlabeled $k$-trees, are in bijection with orbits of unlabeled coding trees under the action of $\Sk$. Our goal then is to count orbits under $\Sk$ of unlabeled coding trees, which we call \emph{color-orbits} of coding trees.

% We shall accomplish this by determining, for each permutation $\pi$ in $\Sk$, the number of unlabeled coding trees fixed by $\pi$ and then applying Burnside's lemma.

Before proceeding with the enumeration, we show how coding trees give a simple way to explain the stability property for the numbers $K_{n,k}$ apparent in Table \ref{ta-k-trees}.

\begin{proposition}
\label{p-stab}
 Let $K_{n,k}$ be the number of $k$-trees with $n+k$ vertices.
 Then $K_{n,k-1}=K_{n,k}$ for $k\ge n-1$.
\end{proposition}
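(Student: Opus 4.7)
My plan is to establish a bijection between (isomorphism classes of) $k$-trees with $n$ hedra and $(k-1)$-trees with $n$ hedra, valid when $k \geq n-1$, realized by deleting (respectively adjoining) a universal vertex (a vertex adjacent to every other vertex of the graph).

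The key ingredient is the existence of a universal vertex in any $k$-tree with $n$ hedra when $k \geq n-1$. To prove this, I would fix a proper $(k+1)$-coloring of the $k$-tree in which each hedron uses all $k+1$ colors, as described in Section~\ref{s-coding}. When $n \geq 1$, every color is exhibited by at least one vertex, because any hedron alone uses all colors. Since the $k$-tree has $n+k$ vertices and the hypothesis $k \geq n-1$ gives $n + k \leq 2k+1 < 2(k+1)$, not every color can appear on two or more vertices, so some color $c$ is worn by exactly one vertex $v^*$. This vertex must be the unique color-$c$ representative of each hedron (every hedron has one vertex of each color), and so lies in every hedron; any vertex lying in every hedron is universal, since any other vertex $u$ lies in some hedron which then forms a clique containing both $v^*$ and $u$. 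The base case $n = 0$ is trivial (the $k$-tree is a $k$-clique and every vertex is universal).

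For the bijection, if $G$ is a $k$-tree with universal vertex $v^*$, tracing the recursive construction of $G$ shows that $v^*$ must belong to the initial $k$-clique and to every $k$-clique to which a subsequent vertex is attached (otherwise $v^*$ would fail to be adjacent to that newly inserted vertex). Deleting $v^*$ therefore turns the initial $k$-clique into a $(k-1)$-clique and each attached $k$-clique into a $(k-1)$-clique, so $G \setminus \{v^*\}$ is a $(k-1)$-tree built via the analogous recursive construction with $n$ insertions, i.e., $n$ hedra. Conversely, adjoining a universal vertex to any $(k-1)$-tree with $n$ hedra produces a $k$-tree with $n$ hedra. These maps descend to mutually inverse bijections on isomorphism classes: the universal vertices of a graph are interchangeable by automorphism (they are pairwise adjacent and each is adjacent to every other vertex), so the choice of universal vertex deleted does not affect the resulting isomorphism class.

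The main obstacle, I expect, is the existence of a universal vertex; the pigeonhole count on color classes is precisely what yields the sharp threshold $k \geq n-1$ seen in Table~\ref{ta-k-trees}. Once this is settled, the recursive-construction argument for the bijection is essentially routine.
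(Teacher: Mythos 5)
Your proof is correct, but it takes a genuinely different route from the paper. The paper works entirely inside its coding-tree framework: it identifies $K_{n,k}$ with the number of color-orbits of coding trees with $n$ black vertices, prunes the colored leaves, observes that every pruned $(k-1)$-coding tree is also a pruned $k$-coding tree (giving an injection on color-orbits), and then shows by a degree/edge count that a pruned coding tree using all $k+1$ colors forces at least $k+2$ black vertices, so the injection is surjective unless $n\ge k+2$. You instead argue directly on the graphs: a pigeonhole on the $k+1$ color classes of a proper coloring (using $n+k\le 2k+1$ when $k\ge n-1$) produces a color used exactly once, hence a vertex lying in every hedron, hence a universal vertex; deleting or adjoining a universal vertex then gives mutually inverse maps between unlabeled $k$-trees and $(k-1)$-trees with $n$ hedra, well defined on isomorphism classes because any two universal vertices are exchanged by an automorphism. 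Your route is elementary and self-contained (no coding trees needed) and yields a structural statement---for $k\ge n-1$ every $k$-tree with $n$ hedra is a cone over a $(k-1)$-tree---while the paper's route stays in the coding-tree language it has already built, gives the inequality $K_{n,k-1}\le K_{n,k}$ for all $n$, and feeds directly into the following proposition on $K_{n,n-2}-K_{n,n-3}$. One small imprecision to tidy: it is not true for an arbitrary construction order that a universal vertex $v^*$ must lie in the initial $k$-clique (it could be the first vertex adjoined, attached to the whole initial clique); since $v^*$ lies in every hedron, either choose a construction order whose initial $k$-clique is a $k$-subset containing $v^*$ of some hedron, or argue by induction on the number of vertices by deleting a simplicial vertex different from $v^*$---after that adjustment your deletion argument goes through verbatim.
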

\begin{proof}
 We first note that $K_{n,k}$ is the number of  $k$-trees with $n$ hedra, and is thus the number of color-orbits of coding trees with $n$ black vertices.

 {}From any coding tree we may obtain a ``pruned coding tree" by deleting its leaves, all of which are colored vertices. In a pruned $k$-coding tree every leaf is black and  every black vertex has at most $k+1$ colored neighbors, whose colors are distinct integers from 1 to $k+1$.

 {}From a pruned $k$-coding tree we can recover the original $k$-coding tree by adding a leaf of color $i$ adjacent to every black vertex with no neighbor of color $i$, for $i$ from~1 to $k+1$. But every pruned $(k-1)$-coding tree is also a pruned $k$-coding tree,  so there is an injection from color-orbits of $(k-1)$-coding trees to color-orbits of $k$-coding trees.
 The only color-orbits of $k$-coding trees not in the image of this injection will be those corresponding to pruned coding trees with at least $k+1$ different colors. In order for a pruned coding tree to have $k+1$ different colors, it must have at least $k+1$ colored vertices. Now suppose that a pruned coding tree has $c$ colored vertices, where $c\ge k+1$. Since the colored vertices of a pruned coding tree are not leaves, each has degree at least 2, and since every edge of the tree is incident with one colored vertex, the tree has at least $2c$ edges, and therefore at least $2c+1$ vertices. Since $c$ vertices are colored, at least $c+1\ge k+2$ are black. Thus $K_{n,k-1}=K_{n,k}$ unless $n\ge k+2$.
\end{proof}

It seems unlikely that there is any simple formula for the ``stable $k$-tree numbers" 1, 1, 1, 2, 5, 15, 64, 342, 2344, \dots, which are listed in \cite[A224917]{oeis}.
It seems to be a coincidence that the second through sixth terms of this sequence are the first five Bell numbers.

We also note that coding trees  explain the differences between the ``stable $k$-tree numbers'' and the final non-stable number in the columns of Table \ref{ta-k-trees}.
\begin{proposition}
For $n\ge 4$ we have
\[ K_{n,n-2} - K_{n,n-3} = K_{n-1,1}. \]
\end{proposition}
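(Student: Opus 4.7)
The plan is to apply the analysis from the proof of Proposition~\ref{p-stab} with $k = n-2$ and identify precisely which color-orbits of $(n-2)$-coding trees fail to lie in the image of the injection from color-orbits of $(n-3)$-coding trees. That proof shows these are exactly the color-orbits whose pruned $(n-2)$-coding tree uses all $n-1$ of the available colors, so $K_{n,n-2} - K_{n,n-3}$ counts color-orbits of unlabeled pruned $(n-2)$-coding trees on $n$ black vertices in which every color $1, \ldots, n-1$ appears.

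I would then observe that the counting inequalities in the proof of Proposition~\ref{p-stab} become equalities in this extremal situation. Writing $c$ for the number of colored vertices, we have $c \ge n - 1$ since $n-1$ distinct colors must appear; on the other hand the tree has $c + n - 1$ edges, each incident with exactly one colored vertex, and every colored vertex (being a non-leaf of the pruned tree) has degree at least $2$, so $c + n - 1 \ge 2c$ and hence $c \le n - 1$. Therefore $c = n - 1$, every colored vertex has degree exactly $2$, and each of the $n - 1$ colors is used on exactly one colored vertex.

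The next step is a smoothing bijection. Each degree-$2$ colored vertex has two distinct black neighbors; replacing it by a single edge labeled with its color turns the pruned coding tree into a tree on the $n$ (labeled) black vertices whose $n-1$ edges are bijectively labeled by $\{1, \ldots, n-1\}$. The coding-tree requirement that a black vertex's colored neighbors have distinct colors is automatic here, since all edge-labels are distinct to begin with, so subdividing each labeled edge by a colored vertex of the corresponding color reverses the construction. Thus the set to be counted is in bijection with labeled trees on $[n]$ equipped with a bijection from their edges to $\{1, \ldots, n-1\}$.

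Finally, I would pass to $\S_n \times \S_{n-1}$-orbits. The action of $\S_{n-1}$ by permuting colors is free on such bijective edge-labelings, so taking the color-orbit amounts simply to discarding the labels; modding out the remaining $\S_n$ action on the black labels then produces isomorphism classes of unlabeled trees on $n$ vertices, counted by $K_{n-1,1}$. I expect no serious obstacle; the only delicate point is the tightness argument that forces $c = n-1$ and forces every colored vertex to have degree exactly $2$, which is what makes the smoothing bijection well-defined.
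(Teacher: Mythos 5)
Your proposal is correct and follows essentially the same route as the paper: it isolates the extremal pruned $(n-2)$-coding trees (exactly $n-1$ colored vertices, each of degree $2$, all colors distinct) via the same edge-counting tightness argument, and then suppresses the degree-$2$ colored vertices to obtain unlabeled trees on the $n$ black vertices, counted by $K_{n-1,1}$. Your treatment of the color-forgetting step via a free $\S_{n-1}$-action on bijective edge-labelings is just a more explicit phrasing of the paper's remark that the colors can be ignored since they are all distinct.
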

\begin{proof}

In the terminology of the proof of Proposition \ref{p-stab}, $K_{n, n-2} - K_{n, n-3}$ counts color-orbits of pruned $(n-2)$-coding trees with $n$ black vertices in which there are colored vertices of all colors from 1 to $n-1$.

In the proof of Proposition \ref{p-stab} we showed that a pruned coding tree with $c$ colored vertices must have at least $c+1$ black vertices, with equality if and only if every colored vertex has degree 2. It follows that the pruned coding trees whose color-orbits are counted by  $K_{n, n-2} - K_{n, n-3}$ have exactly $n-1$ colored vertices,  all of degree 2, and all of different colors.

Since the colored vertices all have different colors, we can ignore colors in constructing the color-orbits, and since each colored vertex has degree two, we can merge  every colored vertex with its two incident edges to obtain an ordinary unlabeled tree on the $n$ black vertices; the number of such objects is $K_{n-1, 1}$ and
the desired equality follows.
\end{proof}

\section{A dissymmetry theorem}

We now prove a dissymmetry theorem that reduces the problem of counting unlabeled coding trees to that of counting rooted unlabeled coding trees.
%Rooted trees are easier to count than unrooted trees, since we can decompose a rooted tree into smaller rooted trees by deleting the root and rooting the components of the forest that remains at the children of the original root.
\begin{lemma}
 \label{l-diss}
 Let $n$ be a positive integer and let $\C_n$ be the set of coding trees with black vertex set $[n]$.  Let $\C_n^\circ$ be the set of rooted coding trees obtained by rooting  a tree in $\C_n$ at a colored vertex, let $\C_n^\bullet$ be the set of rooted coding trees obtained by rooting  a tree in $\C_n$ at a black vertex, and let $\C_n^\cb$ be the set of rooted coding trees obtained by rooting  a tree in $\C_n$ at an edge.
 Then there is a bijection $\Theta$ from $\C_n^\circ\cup \C_n^\bullet$ to $\C_n\cup \C_n^\cb$ that commutes with the actions of $\S_n$ on vertex labels and  of $\Sk$ on colors.
\end{lemma}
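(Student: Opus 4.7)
The strategy is to use the classical \emph{center} of the tree (obtained by iterated leaf-pruning) to define $\Theta$ canonically. For each $T \in \C_n$, the center $C(T)$ is either a single vertex or a single edge. A coding tree is bipartite---every edge joins a black vertex to a colored one---so when $C(T)$ is an edge its two endpoints are of opposite type; write $e^* = \{b^*, c^*\}$ with $b^*$ black and $c^*$ colored.

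I then define $\Theta$ on a vertex-rooted tree $(T,v) \in \C_n^\circ \cup \C_n^\bullet$ by cases. If $C(T) = \{c\}$ is a single vertex and $v = c$, send $(T,v)$ to the unrooted tree $T \in \C_n$. If $C(T) = e^*$ is an edge and $v = b^*$, also send $(T,v)$ to $T$. If $C(T) = e^*$ and $v = c^*$, send $(T,v)$ to the edge-rooted tree $(T, e^*) \in \C_n^\cb$. In all remaining cases the unique path in $T$ from $v$ to $C(T)$ begins with a well-defined edge $e$ incident to $v$, and we send $(T,v)$ to $(T,e)$.

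To check bijectivity, I would construct the inverse directly: an unrooted $T$ with vertex-center $\{c\}$ is sent back to $(T,c)$, and with edge-center $\{b^*, c^*\}$ to $(T,b^*)$; an edge-rooted tree $(T,e)$ is sent back to $(T,v)$ where $v$ is the endpoint of $e$ farther from $C(T)$ if $e \neq C(T)$, and $v = c^*$ if $e = C(T)$. A short case check confirms these two maps are mutually inverse. Equivariance is then essentially automatic: both $\S_n$ and $\Sk$ act by graph automorphisms, hence preserve $C(T)$ and the black/colored type of every vertex, so each choice used to define $\Theta$ commutes with both actions.

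The main obstacle is the edge-center case: a priori, the two vertex-rootings $(T, b^*)$ and $(T, c^*)$ must be split between the images $T \in \C_n$ and $(T, e^*) \in \C_n^\cb$, and for a general tree there would be no canonical way to do so, since an automorphism of $T$ could swap the endpoints of a center edge. The bipartite structure of coding trees rescues the construction, forcing the two endpoints of $e^*$ to be of different types and thereby providing the canonical, equivariant choice that makes $\Theta$ well-defined.
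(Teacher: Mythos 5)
Your proof is correct and follows essentially the same route as the paper: a root-to-center dissymmetry, with $\Theta$ sending a center-rooted tree to its unrooted underlying tree and every other vertex-rooted tree to the tree rooted at the first edge on the path toward the center, equivariance coming from the fact that both group actions preserve the underlying tree, its center, and the black/colored types. The only real difference is that the edge-center case you labor over is vacuous for coding trees---every black vertex has degree $k+1\ge 2$, so all leaves are colored, hence every diametral path joins two colored vertices and has even length and the center is always a single vertex (this is what the paper uses)---though your bipartite tie-break is a correct, if unnecessary, way to cover that case.
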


\begin{proof}
 Every coding tree has a unique center vertex, either black or colored, which is the midpoint of every longest path in the tree, and the center is fixed by both group actions.
 Let $T$ be a rooted tree in $\C_n^\circ\cup \C_n^\bullet$. If $T$ is rooted at its center then we define $\Theta(T)$
 to be the underlying unrooted tree of $T$. Otherwise, there is a unique path from the root $r$ of $T$ to the center, and we take $\Theta(T)$ to be the underlying tree of $T$ rooted at the first edge on the path from $r$ to the center. It is easily seen that $\Theta$ is a bijection that commutes with the actions of $\S_n$ and $\Sk$.
\end{proof}

Since the bijection $\Theta$ of Lemma \ref{l-diss} commutes with the action of $\S_n$ on vertices, it is well-defined on unlabeled coding trees, and its application to coding trees commutes with the action of $\Sk$, so it gives a corresponding bijection for color-orbits of coding trees, which are equivalent to $k$-trees.

\section{Counting unlabeled coding trees}

{}From now on we work with unlabeled (but colored) coding trees. We define the \emph{weight} of a coding tree with $n$ black vertices to be $x^n$; then the generating function for a set of trees is the sum of the weights of its elements.

We call a coding tree rooted at a black vertex a \emph{black-rooted tree} and we call a  coding tree rooted at a colored vertex a \emph{colored-rooted tree}. We call a colored-rooted tree with root of color $j$  a \emph{$j$-rooted tree}.

Our next lemma follows directly from Lemma \ref{l-diss}.

\begin{lemma}
 \label{l-diss2}
 Let $U$ be the generating function for \textup{(}unlabeled\textup{)} color-orbits of coding trees, let $\BB$ be the generating function for color-orbits of black-rooted trees,  let $C$ be the generating function for color-orbits of colored-rooted trees, and let $E$ be the generating function for color-orbits of coding trees rooted at an edge. Then
 \begin{equation*}
   U =  \BB +C -E.
 \end{equation*}
\end{lemma}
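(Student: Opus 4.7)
The plan is to simply apply the bijection $\Theta$ from Lemma \ref{l-diss} and pass to orbits. First, I would observe that the sets in that lemma split disjointly: $\C_n^\circ\cap\C_n^\bullet=\emptyset$, since the root of a rooted coding tree is unambiguously either colored or black, and $\C_n\cap\C_n^\cb=\emptyset$, since a tree in $\C_n$ is unrooted while a tree in $\C_n^\cb$ carries a distinguished edge. Each of the four sets is invariant under the product action of $\S_n\times\Sk$ on labels and colors (rooting at a vertex or edge is preserved by both actions), and the weight $x^n$ depends only on the number of black vertices, so it is constant on $(\S_n\times\Sk)$-orbits and is preserved by $\Theta$ since $\Theta$ does not alter the underlying unrooted tree.

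Next, because $\Theta$ commutes with both the $\S_n$-action and the $\Sk$-action, it commutes with the joint action of $\S_n\times\Sk$ and therefore descends to a weight-preserving bijection between the orbit sets
\[ (\C_n^\circ\cup\C_n^\bullet)/(\S_n\times\Sk) \;\longleftrightarrow\; (\C_n\cup\C_n^\cb)/(\S_n\times\Sk). \]
Taking orbits first under $\S_n$ gives unlabeled coding trees of each type, and then taking orbits under $\Sk$ gives the corresponding color-orbits; since the actions commute, this is the same as the joint orbit set. Summing $x^n$ over all $n\ge 1$ and all color-orbits in each of the four classes, the disjointness noted above gives $C+\BB$ on the left-hand side and $U+E$ on the right-hand side, whence
\[ U = \BB + C - E, \]
as claimed.

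Since every substantive input (the bijection itself, its equivariance, and the fact that it preserves the underlying unrooted tree and hence the black vertex count) has already been established in Lemma \ref{l-diss}, there is no real obstacle here; the only thing to be careful about is the bookkeeping that identifies orbits under the commuting actions with color-orbits of unlabeled coding trees, which is straightforward.
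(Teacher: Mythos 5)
Your argument is correct and follows exactly the route the paper takes: Lemma \ref{l-diss2} is obtained by passing the equivariant bijection $\Theta$ of Lemma \ref{l-diss} to orbits under the commuting actions of $\S_n$ and $\Sk$, noting the disjointness of the four classes and that weights are preserved. The paper simply states that the lemma ``follows directly'' from Lemma \ref{l-diss}; your write-up fills in the same bookkeeping without deviating from that approach.
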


% \begin{lemma}
%   \label{l-diss2}
%   Let $U$ be the generating function for \textup{(}unlabeled\textup{)} $k$-trees, let $C$ be the generating function for $k$-trees rooted at a front, let $\BB$ be the generating function for $k$-trees rooted at a hedron, and let $E$ be the generating function for $k$-trees rooted at a hedron and a front that it contains, where all generating functions are by the number of hedra. Then
%   \begin{equation*}
%     U =  C + \BB  -E.
%   \end{equation*}
% \end{lemma}

% \begin{proof}
%   We apply Lemma \ref{l-diss} to color-orbits of unlabeled coding trees, using the fact that colored vertices correspond to fronts and black vertices correspond to hedra. \comment{But in computing $C, \BB, E$ we use the coding tree interpretation, so this should be changed.}
% \end{proof}
%

It is clear that  $\BB$, $C$, and $E$ may also be interpreted in terms of rooted unlabeled $k$-trees.

Our goal in the remainder of this section is to compute the generating functions $\BB$, $C$, and $E$ appearing in Lemma \ref{l-diss2}.

Let us introduce some notation. For each $\pi\in \Sk$  let $\BB_\pi=\BB_\pi(x)$ be the generating function for black-rooted trees  that are fixed by $\pi$. It is clear that  $\BB_\pi$ depends only on the cycle type of $\pi$, so
for any partition $\lambda$ of $k+1$ we may set  $\BB_\lambda=\BB_\pi$, where $\pi$ is a permutation of cycle type $\lambda$.

For our decompositions we will need to consider a variation of black-rooted  trees. If we delete the root from a $j$-rooted tree,  we obtain trees rooted at black vertices that are like black-rooted  trees, but in which the roots have neighbors of all colors except~$j$. We call these trees \emph{$j$-reduced black-rooted  trees}.
If a $j$-reduced black-rooted  tree is fixed by $\pi$ then $j$ must be a fixed point of $\pi$, but as long as $j$ is a fixed point of $\pi$, the generating function for $j$-reduced black-rooted trees  fixed by $\pi$ depends only on the cycle type of $\pi$, and is zero if $\pi$ has no fixed points. So for any permutation $\pi\in \Sk$, we define $\Bx_\pi=\Bx_\pi(x)$ to be the generating function for $j$-reduced black-rooted  trees fixed by $\pi$, where $j$ is any fixed point of $\pi$ (and $\Bx_\pi=0$ if $\pi$ has no fixed points). For any partition $\lambda$ of $k+1$, we may define $\Bx_\lambda$ to be $\Bx_\pi$ where $\pi$ is
a permutation of $[k+1]$ of cycle type $\lambda$.

Similarly, if a colored-rooted  tree with root of color $j$ is fixed by $\pi$ then $j$ must be a fixed point of $\pi$. For a permutation $\pi$ of $[k+1]$, we define $C_\pi=C_\pi(x)$ to be the generating function for colored-rooted  trees fixed by $\pi$ with root of color $j$, where $j$ is any fixed point of $\pi$ (and $C_\pi=0$ if $\pi$ has no fixed points). For any partition $\lambda$ of $k+1$, we  define $C_\lambda$ to be $C_\pi$ where $\pi$ is
a permutation of $[k+1]$  of cycle type~$\lambda$.
Note that $\Bx_\lambda$ and $C_\lambda$ are zero if 1 is not a part of $\lambda$.
It will be convenient in the subsequent discussion  to define $\Bx_\mu$ and $C_\mu$ for $\mu$ a partition of~$k$ by $\Bx_\mu=\Bx_\lambda$ and $C_\mu=C_\lambda$ where $\lambda$ is obtained from $\mu$ by adding an additional part~1.

If $\lambda$ is a partition, then by $\lambda^i$ we mean the cycle type of $\pi^i$, where $\pi$ is a permutation of cycle type $\lambda$. We note that each $p$-cycle of $\pi$ contributes $(p,i)$ cycles, each of length $p/(p,i)$ to $\pi^i$, where $(p,i)$ is the greatest common divisor of $p$ and $i$.

We may now state our final result. Although our formula for $U$ consists of functional equations for a number of different power series, it is fairly easy to compute the coefficients of these series by successive substitution.

\begin{theorem}
 \label{thm:main}
 The generating function $U$ for unlabeled $k$-trees is given by
 \begin{equation*}
   U = \BB+C-E,
 \end{equation*}
 where
 \begin{subequations}
   \begin{align}
     \BB &= \sum_{\lambda\vdash k+1} \BB_{\lambda}/z_\lambda, \label{e-BB1}\\
     C &= \sum_{\mu\vdash k} C_\mu/z_\mu,\label{e-C}\\
     E &= \sum_{\mu\vdash k} \Bx_\mu C_\mu/z_\mu,\label{e-E}\\
     \BB_\lambda &= x\prod_i C_{\lambda^i}(x^{i}),\label{e-BBlambda}\\
     \Bx_\mu &= x\prod_i C_{\mu^{i}}(x^{i}),\label{e-Bmu}\\
     C_\mu &=  \exp\biggl(\sum_{m=1}^\infty   \frac{\Bx_{\mu^m}(x^m)}{m} \biggr).\label{e-Cmu}
   \end{align}
 \end{subequations}

 In \eqref{e-BBlambda}, $\lambda$ is a partition of $k+1$ and in \eqref{e-Bmu} and \eqref{e-Cmu}, $\mu$ is a partition of $k$.
 In  the products in \eqref{e-BBlambda} and \eqref{e-Bmu},
 $i$ runs through the parts of $\lambda$ and $\mu$ with multiplicities; i.e., if $i$ occurs $m$ times as a part then $i$ is taken $m$ times in the product.

 %
 % \begin{align*}
 %   \BB &=\sum_{\lambda\vdash k+1} \BB_{\lambda}/z_\lambda,\\
 %   C &=  \sum_{\mu\vdash k} C_\mu/z_\mu,\\
 %   E &= \sum_{\mu\vdash k} \Bx_\mu C_\mu/z_\mu,\\
 %   \BB_\lambda &= x\prod_i C_{\lambda^i}(x^{i}),\\
 %   \Bx_\mu&=x\prod_i C_{\mu^{i}}(x^{i}),\\
 %   C_\mu &=  \exp\biggl(\sum_{m=1}^\infty   \frac{\Bx_{\mu^m}(x^m)}{m} \biggr).
 % \end{align*}

 % \begin{gather}
 %   \begin{aligned}
 %     \BB &=\sum_{\lambda\vdash k+1} \BB_{\lambda}/z_\lambda,\tag\\
 %     C &=  \sum_{\mu\vdash k} C_\mu/z_\mu,\\
 %     E &= \sum_{\mu\vdash k} \Bx_\mu C_\mu/z_\mu,
 %   \end{aligned}\\
 %   \BB_\lambda = x\prod_i C_{\lambda^i}(x^{i}),\\
 %   \Bx_\mu=x\prod_i C_{\mu^{i}}(x^{i}),\\
 %   C_\mu =  \exp\biggl(\sum_{m=1}^\infty   \frac{\Bx_{\mu^m}(x^m)}{m} \biggr).
 % \end{gather}
 %
\end{theorem}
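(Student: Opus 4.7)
The plan is to combine the dissymmetry theorem (Lemma \ref{l-diss2}), Burnside's lemma (Lemma \ref{l-burnside}), and the multiset exponential formula (Lemma \ref{l-ms}) with a recursive decomposition of rooted coding trees around their roots. The identity $U = \BB + C - E$ is precisely Lemma \ref{l-diss2}, so nothing is required for that.

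The formulas \eqref{e-BB1}--\eqref{e-E} for $\BB$, $C$, and $E$ should follow from applying Burnside's lemma to the $\Sk$-action on color-orbits of rooted unlabeled coding trees. This writes each as $\sum_{\lambda \vdash k+1} f_\lambda/z_\lambda$ with $f_\lambda$ the total weight of trees fixed by a permutation of cycle type $\lambda$. For $\BB$ we have $f_\lambda = \BB_\lambda$ directly, giving \eqref{e-BB1}. For $C$ and $E$, any $\pi$-fixed tree must have its colored root (resp.\ the colored endpoint of the root edge) in a color $j$ that is a fixed point of $\pi$; by symmetry across the $l_1(\lambda)$ fixed points, $f_\lambda$ equals $l_1(\lambda)\,C_\lambda$ and $l_1(\lambda)\,\Bx_\lambda C_\lambda$ respectively (the latter because severing the root edge yields a $j$-rooted piece and a $j$-reduced black-rooted piece, whose weights multiply). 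Re-indexing $\lambda = \mu \cup \{1\}$ with $\mu \vdash k$, one computes $z_\lambda = (l_1(\mu)+1)\,z_\mu$ and $l_1(\lambda) = l_1(\mu)+1$, so $l_1(\lambda)/z_\lambda = 1/z_\mu$; substitution yields \eqref{e-C} and \eqref{e-E}.

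The core of the proof is the recursive decomposition producing \eqref{e-BBlambda}--\eqref{e-Cmu}. For \eqref{e-BBlambda}, a $\pi$-fixed black-rooted tree has its black root (contributing a factor $x$) joined to $k+1$ colored neighbors, one per color; $\pi$ permutes them according to its action on colors. For each cycle of $\pi$ of length $i$, the $i$ colored neighbors in that cycle are cyclically permuted together with their hanging subtrees, so the whole configuration is determined by any one distinguished subtree, which must be fixed by $\pi^i$ and rooted at a color fixed by $\pi^i$. The generating function for one such subtree in a variable $y$ is $C_{\lambda^i}(y)$; because the cycle produces $i$ identical copies, its total weight contribution is $C_{\lambda^i}(x^i)$. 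Multiplying over all cycles of $\pi$ with multiplicity gives \eqref{e-BBlambda}, and the same argument after deleting the fixed color $j$ (leaving $k$ colors of cycle type $\mu$) yields \eqref{e-Bmu}. For \eqref{e-Cmu}, a $\pi$-fixed $j$-rooted tree (with $j$ a fixed point of $\pi$) is its colored root together with a $\pi$-invariant multiset of $j$-reduced black-rooted subtrees, so I apply Lemma \ref{l-ms} with $S$ the set of $j$-reduced black-rooted trees and $g = \pi$: by definition $\fix(\pi^m) = \Bx_{\mu^m}(x)$, and since every weight is a power of $x$, $p_m[\Bx_{\mu^m}(x)] = \Bx_{\mu^m}(x^m)$, giving precisely \eqref{e-Cmu}.

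The main obstacle will be the bookkeeping in the black-root decomposition for \eqref{e-BBlambda} and \eqref{e-Bmu}: one must justify carefully that a cycle of $\pi$ of length $i$ on colors contributes $C_{\lambda^i}(x^i)$ rather than, say, $C_{\lambda^i}(x)^i$, which hinges on the fact that the $i$ subtrees in the cycle are mutual translates of a single representative and hence determined by it. The re-indexing $\lambda \vdash k+1 \leftrightarrow \mu \vdash k$ underlying \eqref{e-C} and \eqref{e-E} is also delicate but routine once the factor of $l_1(\lambda)$ is identified. With these verifications in place, Lemmas \ref{l-burnside} and \ref{l-ms} furnish the rest.
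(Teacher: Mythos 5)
Your proposal is correct and follows essentially the same route as the paper: the dissymmetry identity of Lemma \ref{l-diss2}, Burnside's lemma for \eqref{e-BB1}--\eqref{e-E}, the cycle-by-cycle root-deletion argument for \eqref{e-BBlambda} and \eqref{e-Bmu}, and Lemma \ref{l-ms} for \eqref{e-Cmu}. The only (harmless) deviation is in \eqref{e-C} and \eqref{e-E}, where the paper first passes to the stabilizer $\S_k$ of color $k+1$ acting on $(k+1)$-rooted trees and then applies Burnside, while you apply Burnside to all of $\Sk$ and re-index using $l_1(\lambda)/z_\lambda = 1/z_\mu$ for $\lambda=\mu\cup\{1\}$; both yield the same sums over $\mu\vdash k$.
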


% :asdlfjkh
% :asdlfkj;

% We first apply Lemma \ref{l-diss} to reduce the problem of counting unrooted trees to the rooted case. \comment{The next section needs to be rewritten.}
% \gray{
% \begin{lemma}
%   \label{l-unroot} For any partition $\lambda$ of $k+1$ we have
%   \begin{equation*}
%     U_\lambda = \BB_\lambda+m_1(\lambda)C_\lambda - m_1(\lambda) \Bx_\lambda C_\lambda,
%   \end{equation*}
%   where  $m_1(\lambda)$ is the multiplicity of $1\!$ as a part in $\lambda$.
% \end{lemma}
% \begin{proof}
%   We apply Lemma \ref{l-diss} to coding trees that are fixed by a permutation $\pi$ of cycle type $\lambda$. A tree in $\C_n^{\circ\bullet}$, rooted at an edge $e$, and fixed by $\pi$, can be identified with the two rooted trees obtained by  removing $e$ and rooting the remaining trees at the endpoints of $e$. So we can identity such a tree with an ordered pair consisting of a color-rooted coding tree fixed by $\pi$, with root of some color $i$ fixed by $\pi$, together with a reduced black-rooted coding tree of type $i$. Thus the generating  function for trees counted by $\C_n^{\circ\bullet}$ for some $n$ is $m_1(\pi)  \Bx_\pi C_\pi$, where $m_1(\pi)$ is the number of fixed points of $\pi$. The contributions from the other sets of trees are clear.
%
%   The contribution from $\C_n$ is $U_\pi$, the contribution from $\C_n^\circ$ is $m_1(\pi)C_\pi$, and the contribution from $\C_n^\bullet$ is $\BB_\pi$.
% \end{proof}
%
% Note that if 1 is not a part of $\lambda$ then Lemma \ref{l-unroot} gives $U_\lambda=\BB_\lambda$.\\
% }% end of gray
%
\begin{proof}
 Formula \eqref{e-BB1} follows directly from Lemma \ref{l-burnside} (Burnside's lemma).

 The generating function $C$ for color-orbits under $\Sk$ of colored-rooted  trees is the same as the generating function  for  color-orbits under the action of $\S_k$, permuting the colors 1 through $k$, of $k+1$-rooted trees, since every color-orbit of colored-rooted trees contains a $k+1$-rooted tree.
 Then  \eqref{e-C} follows from Lemma \ref{l-burnside}.

 Similarly, the generating function $E$ for  color-orbits under $\Sk$ of coding trees rooted at an edge is the same as the generating function  for  color-orbits under the action of $\S_k$, permuting the colors 1 through $k$, of coding trees rooted at an edge incident with a vertex of color $k+1$.
 Removing the root edge from such a tree leaves a $k+1$-rooted  tree together with a $k+1$-reduced black-rooted  tree.
 Thus, if $\pi\in \Sk$ fixes $k+1$, the generating function for such pairs fixed by $\pi$ is $C_\pi \Bx_\pi$, so \eqref{e-E} follows.

 Next, for $\pi\in \Sk$ we find an equation for $\BB_\pi$, which counts black-rooted trees fixed by~$\pi$.
 The root of such a tree has $k+1$ children, one of each of the colors from 1 to $k+1$.
 If we delete the root, we are left with trees $T_1,\dots, T_{k+1}$, where tree $T_j$ is rooted at a vertex of color~$j$.
 Now suppose that $j$ is in a cycle of $\pi$ of length $i$.
 Then the orbit of $T_j$ under $\pi$ consists of  $T_j, T_{\pi(j)} =\pi(T_j), \dots, T_{\pi^{i-1}(j)}=\pi^{i-1}(T_j)$, and we must have $\pi^i(T_j) = T_j$.
 Thus to determine a black-rooted tree fixed by $\pi$, we choose from each cycle of $\pi$ an arbitrary element $j$, and take $T_j$ to be a $j$-rooted tree that is fixed by $\pi^i$, where $i$ is the length of the cycle of $\pi$ containing $j$.
 Then $T_{\pi(j)}, \dots, T_{\pi^{i-1}(j)}$ are determined and all have the same weight as $T_j$.
 The generating function for $j$-rooted trees fixed by $\pi^i$ is $C_{\pi^i}(x)$ (independently of the choice of $j$), so the contribution to $\BB_\pi$ from a cycle of  $\pi$ of length $i$ is $C_{\pi^i}(x^i)$.
 Thus
 \begin{equation}
   \label{e-BB}
   \BB_\pi=x\prod_c C_{\pi^{|c|}}(x^{|c|})
 \end{equation}
 where $c$ runs over the cycles of $\pi$ and $|c|$ is the size of the cycle $c$. Thus  \eqref{e-BBlambda} follows, and a similar argument gives
 \eqref{e-Bmu}.

 % For any partition $\lambda$ of $k+1$, and any integer $i$, let $\lambda^i$ be the cycle type of $\pi^i$, where $\pi$ is a permutation of cycle type $\lambda$. Then we may write
 % \eqref{e-BB} as
 % \begin{equation*}
 %   \BB_\lambda = x\prod_i C_{\lambda^i}(x^{i}),
 % \end{equation*}
 % where $i$ runs through the parts of $\lambda$ with multiplicities; i.e., if $i$ occurs $m$ times as a part of $\lambda$ then $i$ is taken $m$ times in the product.
 %

 % a formula for $\Bx_\mu(x)$, where $\mu$ is a partition of $k$:
 % \begin{equation}
 %   \label{e-B}
 %   \Bx_\mu=x\prod_i C_{\mu^{i}}(x^{i}).
 % \end{equation}
 %

 Next we need to find a formula for $C_\pi$. Since $C_\pi=0$ if $\pi$ has no fixed points, we may assume without loss of generality that $k+1$ is a fixed point of~$\pi$. Suppose that $T$ is a $k+1$-rooted tree that is fixed by the permutation~$\pi$. Removing the root from $T$ leaves a multiset of $k+1$-reduced black-rooted trees that is fixed by $\pi$. Thus $C_\pi$ is the generating function for these multisets, and applying Lemma~\ref{l-ms} gives
   \begin{equation*}
   C_\pi =  \exp\biggl(\sum_{m=1}^\infty   \frac{\Bx_{\pi^m}(x^m)}{m} \biggr),
 \end{equation*}
 and \eqref{e-Cmu} follows.
\end{proof}
% The formula will be the same for trees rooted at a vertex of any color, and so for any partition $\mu$ of $k$ we have
% \begin{equation*}
%   C_\mu =  \exp\biggl(\sum_{m=1}^\infty   \frac{\Bx_{\mu^m}(x^m)}{m} \biggr).
% \end{equation*}

%
%
%
% For example, suppose $g^2=e$.
% For simplicity, I'll identify elements of $S$ with their weights.
% Then the result is
% \begin{equation*}
%   \prod_{gx\ne x} \frac{1}{1-w(x) w(gx)} \prod_{gx = x} \frac{1}{1-w(x)}
% \end{equation*}
% where in the first product each pair $\{x, gx\}$ appears only once.
% This may be written as
% \begin{equation*}
%   h\biggl[\sum_{gx\ne x} w(x) w(gx) +\sum_{gx=x} w(x)\biggr].
% \end{equation*}
% Since $w(x)=w(gx)$, the expression inside the brackets is
% \begin{align*}
%   \sum_{gx\ne x} w(x)^2 +\sum_{gx=x} w(x) & =\frac12\biggl(\sum_{ex=x}w(x)^2 - \sum_{gx=x} w(x)^2\biggr) + \sum_{gx=x} w(x)\\
%   &=\frac12(p_2[\fix(e)] - p_2[\fix(g)])+p_1[\fix(g)].\\
% \end{align*}
% The factor of $1/2$ arises since we want to count only one of $x$ and $gx$ when  $ gx\ne x$.

% \section{Counting color-orbits}
% \label{s-orbits}

\section{Counting trees}
\label{sec:counttrees}
% \section{The case \texorpdfstring{$k=1$}{k=1}}
Here we consider in detail the case $k=1$, where we are counting ordinary trees by the number of edges. There are only two elements of $\S_2$, and two partitions of $k+1=2$, only one of which has 1 as a part.

We have
% \begin{align*}
%   \BB_{1,1}&=xC_1(x)^2\\
%   \BB_{2}&=xC_1(x^2) \\
%   \Bx_1&=xC_1(x)\\
%   C_1 &=  \exp\biggl(\sum_{m=1}^\infty   \frac{\Bx_{1}(x^m)}{m} \biggr)\\
%   \BB&=\frac{x}{2} \bigl(C_1(x)^2+C_1(x^2)\bigr)\\
%   C&=C_1(x)\\
%   E&=xC_1(x)^2.
% \end{align*}
{\allowdisplaybreaks
 \begin{align*}
   \BB_{1,1} &= xC_1(x)^2\\
   \BB_{2} &= xC_1(x^2) \\
   \Bx_1 &= xC_1(x)\\
   C_1  &=   \exp\biggl(\sum_{m=1}^\infty   \frac{\Bx_{1}(x^m)}{m} \biggr)\\
   \BB &= \frac{x}{2} \bigl(C_1(x)^2+C_1(x^2)\bigr)\\
   C &= C_1(x)\\
   E &= xC_1(x)^2.
 \end{align*}
}%end of \allowdisplaybreaks
Here everything can be expressed in terms of $C_1$, which is the generating function for vertex-rooted unlabeled trees by the number of edges, and we can simplify the result to
\begin{subequations}
 \begin{align}
   C_1 &=  \exp\biggl(\sum_{m=1}^\infty   \frac{x^m C_{1}(x^m)}{m} \biggr), \label{e-C1} \\
   U &= C_1(x) -\frac x2\bigl(C_1(x)^2-C_1(x^2)\bigr),\label{e-U1}
 \end{align}
\end{subequations}
which is equivalent to the well-known formula found by Otter \cite{otter}. (Unlabeled trees had been counted earlier using different methods by Cayley \cite{cayley1, cayley2}.)
% The coefficients of $U_{1,1}$ are sequence \seqnum{A122086} in the OEIS (see also \seqnum{A122085}) , where they are described as counting unlabeled free bicolored trees, which is essentially the same as our interpretation.

We sketch here a version of Otter's dissimilarity characteristic interpretation of \eqref{e-U1}, as in the next section we will discuss a related, but more complicated, dissimilarity characteristic formula for 2-trees.

% We use the interpretation of $U(x)$ as counting (unlabeled) trees by the number of edges.

% The functional equation \eqref{e-C1} for $C_1$ shows that $C_1$ counts rooted trees by edges.

% We apply the following \dc\ for trees.

We define a \emph{symmetry edge} of a tree to be an edge $e$ such that some automorphism of the tree reverses the endpoints of $e$. We say that two vertices $u$ and $v$ of a tree are \emph{similar} if there is some automorphism of the tree that takes $u$ to $v$, and similar edges are defined analogously. Otter's \dc\ theorem is the following.

\begin{lemma}
 \label{l-diss1}
 The number of dissimilar vertices of a tree \textup{(}i.e., the number of equivalence classes under similarity\textup{)} is one more than the number of dissimilar edges that are not symmetry edges.
\end{lemma}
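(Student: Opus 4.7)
The plan is to compare vertex and edge orbits under the action of the automorphism group $G$ of $T$ by means of the natural ``parent edge'' map pointing toward the center of $T$, handling the (at most one) symmetry edge as a correction term. I would begin by recalling that every tree has a unique center, which is either a single vertex $c$ or a single edge $e_0=\{a,b\}$, and that every element of $G$ fixes the center setwise. I would then prove the key auxiliary claim that the only edge of $T$ that can ever be a symmetry edge is the center edge in the case that the center is an edge: if $\sigma\in G$ swaps the endpoints of an edge $\{u,v\}$, then $\sigma$ can have no fixed vertex (any fixed vertex would be equidistant from $u$ and $v$, which is impossible in a tree when $u$ and $v$ are adjacent), and a fixed-point-free automorphism of a tree is forced to swap the two endpoints of the center edge; hence $\{u,v\}$ \emph{is} the center edge.

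Next, for each non-center vertex $v$ of $T$, I would define $\phi(v)$ to be the first edge on the path from $v$ to the center; this map is $G$-equivariant by uniqueness of paths and the $G$-invariance of the center. When the center is a vertex $c$, $\phi$ is a bijection from $V(T)\setminus\{c\}$ to $E(T)$, since each edge $e$ separates $T$ into two components and its endpoint in the component not containing $c$ is its unique preimage. When the center is an edge $e_0$, the same argument shows that $\phi$ is a bijection from $V(T)\setminus\{a,b\}$ to $E(T)\setminus\{e_0\}$.

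Finally, since $\phi$ descends to a bijection of $G$-orbits, I would carry out the orbit count in three cases. Writing $v_\circ$ and $e_\circ$ for the numbers of orbits of vertices and edges: if the center is a vertex then $v_\circ-1=e_\circ$ and (by the auxiliary claim) there are no symmetry edges; if the center is $e_0$ and $e_0$ is a symmetry edge (so $a$ and $b$ lie in one orbit) then $v_\circ-1=e_\circ-1$; and if the center is $e_0$ but is not a symmetry edge (so $a$ and $b$ lie in distinct orbits, as no automorphism swaps them) then $v_\circ-2=e_\circ-1$. In all three cases this rearranges to $v_\circ=(\text{number of non-symmetry-edge orbits})+1$, as required. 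The main obstacle is the auxiliary claim that every symmetry edge must be the center edge; once that is established, the rest is a routine bijective accounting split by the location of the center.
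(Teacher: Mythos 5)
Your proof is correct and follows essentially the same route as the paper: an equivariant ``first edge toward the center'' bijection together with a case analysis at the center, where your auxiliary claim that only the center edge can ever be a symmetry edge is exactly the fact the paper leaves implicit when it counts all paired edges as non-symmetry edges. The one step worth an extra line is the final ``hence'' in that claim: knowing $\sigma$ swaps the endpoints of the center edge does not by itself identify $\{u,v\}$ with that edge; rather, since $\sigma$ exchanges the two components of $T$ minus $\{u,v\}$, the only edge it can preserve setwise is $\{u,v\}$ itself, and as the center edge is $\sigma$-invariant it must equal $\{u,v\}$.
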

\begin{proof}
 Let  $T$ be a tree. Then $T$ has  a center which is either a vertex or an edge and is fixed by every automorphism of $T$. To each vertex $v$ that is neither a center vertex nor an endpoint of a center edge we associate the first edge on the path from $v$ to the center. This is an equivariant bijection (with respect to the automorphism group of the tree) from  the vertices of $T$ to the edges of~$T$ other than center vertices, center edges, and vertices of center edges, and so among the paired vertices and edges there are as many dissimilar vertices as edges. Then  the unpaired vertices and edges consist of one of the following: (i) a center vertex;
 (ii) a center edge which is not a symmetry edge, and its two dissimilar endpoints;
 (iii) a  center edge which is a symmetry edge, and its two similar endpoints.
 In each case, among the unpaired vertices and edges the number of dissimilar vertices is one more than the number of dissimilar non-symmetry edges.
\end{proof}

As a consequence, for any unlabeled tree, the number of dissimilar ways to root it at a vertex is one more than the number of dissimilar ways to root it at a non-symmetry edge. The generating function for unlabeled rooted trees is $C_1(x)$ and the generating function for unlabeled trees rooted at a non-symmetry edge is $\tfrac12x(C_1(x)^2 - C_1(x^2))$, so their difference counts every (unrooted) tree once.

% {\allowdisplaybreaks
\section{Counting 2-trees}
\label{sec:count2trees}
Next we look in detail at the case $k=2$.
Here we have
% \begin{align*}
%   \BB_{1,1,1}&= xC_{1,1}^3 \\
%   \BB_{2,1}&= xC_{2}(x) C_{1,1}(x^2)\\
%   \BB_{3} &= xC_{1,1}(x^3)\\
%   \Bx_{1,1} &=xC_{1,1}(x)^2  \\
%   \Bx_{2} &= xC_{1,1}(x^2) \\
%   C_{1,1}&= \exp\biggl(\sum_{m=1}^\infty \frac{\Bx_{1,1}(x^m)}{m} \biggr) \\
%   C_2 &=\exp\biggl(\sum_{\text{$m$ odd}} \frac{\Bx_{2}(x^m)}{m}
%   + \sum_{\text{$m$ even}} \frac{\Bx_{1,1}(x^m)}{m}
%   \biggr)  \\
%   \BB &= \frac{x}{6}\bigl(C_{1,1}(x)^3 + 3C_{1,1}(x^2)C_2(x) + 2C_{1,1}(x^3)\bigr)\\
%   C &= \tfrac12 C_{1,1}+\tfrac12 C_2\\
%   E &= \frac{x}{2}\bigl(C_{1,1}(x)^3 +C_{1,1}(x^2) C_2(x)\bigr).
% \end{align*}
\begin{align*}
 \BB_{1,1,1} &=  xC_{1,1}^3 \\
 \BB_{2,1} &=  xC_{2}(x) C_{1,1}(x^2)\\
 \BB_{3}  &=  xC_{1,1}(x^3)\\
 \Bx_{1,1}  &= xC_{1,1}(x)^2  \\
 \Bx_{2}  &=  xC_{1,1}(x^2) \\
 C_{1,1} &=  \exp\biggl(\sum_{m=1}^\infty \frac{\Bx_{1,1}(x^m)}{m} \biggr) \\
 C_2  &= \exp\biggl(\sum_{\text{$m$ odd}} \frac{\Bx_{2}(x^m)}{m}
 + \sum_{\text{$m$ even}} \frac{\Bx_{1,1}(x^m)}{m}
 \biggr)  \\
 \BB &= \frac{x}{6}\bigl(C_{1,1}(x)^3 + 3C_{1,1}(x^2)C_2(x) + 2C_{1,1}(x^3)\bigr)\\
 C  &=  \tfrac12 C_{1,1}+\tfrac12 C_2\\
 E  &=  \frac{x}{2}\bigl(C_{1,1}(x)^3 +C_{1,1}(x^2) C_2(x)\bigr).
\end{align*}
% }%end of \allowdisplaybreaks

These formulas simplify to
\begin{equation}
 \label{e-U2}
 U = C - \tfrac 13 x\bigl(C_{1,1}^3 -C_{1,1}(x^3)\bigr), \text{ with $C=\tfrac12(C_{1,1} + C_2)$},
\end{equation}
where
\begin{equation*}
 C_{1,1}= \exp\biggl(\sum_{m=1}^\infty \frac{x^m}{m}C_{1,1}(x^m)^2 \biggr)
\end{equation*}
and
\begin{equation*}
 C_2 =\exp\biggl(\sum_{\text{$m$ odd}}  \frac{x^m}{m}C_{1,1}(x^{2m})
 + \sum_{\text{$m$ even}}  \frac{x^m}{m}C_{1,1}(x^m)^2
 \biggr).
\end{equation*}

We can also  interpret  \eqref{e-U2} by a dissimilarity theorem, and this approach could be used to give a shorter self-contained derivation of the generating function for 2-trees. It is not hard to see that~$C$ counts (unlabeled) 2-trees rooted at an edge (by the number of triangles) and that $C_{1,1}$ counts 2-trees rooted at a directed edge.

For any (labeled) 2-tree $T$, a \emph{directed triangle} of $T$ is a cyclic orientation of a triangle of $T$. A \emph{symmetry triangle} is a directed triangle whose directed edges are all in the same orbit under the automorphism group of $T$.  A \emph{nonsymmetry triangle} is a directed triangle that is not a symmetry triangle. The generating function for 2-trees rooted at a symmetry triangle is $xC_{1,1}(x^3)$,  so the generating function for 2-trees rooted at a  nonsymmetry triangle is $\tfrac13 x(C_{1,1}^3 -C_{1,1}(x^3))$.  (See Figure \ref{f-triangle}.)
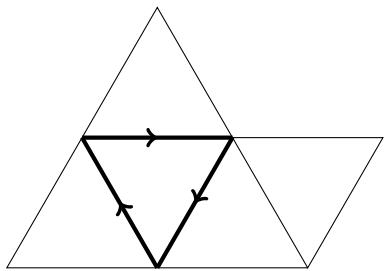
\begin{figure}[htbp] %  figure placement: here, top, bottom, or page
 \centering
 \begin{tikzpicture}
   \coordinate (a);
   \path [graphedge] (a) to ++(180:2) coordinate (b) to ++(180:2) coordinate (c) to ++(60:2) coordinate (d) to ++(60:2) coordinate (e) to ++(-60:2) coordinate (f) to (a) to ++(60:2) coordinate (g) to (f);
   \path [rootedge,->-] (d) to (f);
   \path [rootedge,->-] (f) to (b);
   \path [rootedge,->-] (b) to (d);
 \end{tikzpicture}

 \caption{A nonsymmetry-triangle-rooted 2-tree}
 \label{f-triangle}
\end{figure}

Then just as Otter's formula \eqref{e-U1} follows from Lemma \ref{l-diss1},  formula \eqref{e-U2} follows from a  \dc\ theorem for 2-trees:

\begin{lemma}
 \label{e-2diss}
 The number of dissimilar edges of a $2$-tree is one more than the number of dissimilar nonsymmetry triangles.
\end{lemma}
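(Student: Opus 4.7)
The plan is to apply Otter's strategy from the proof of Lemma \ref{l-diss1} to the coding tree $T_c$ of the 2-tree $T$, with $\mathrm{Aut}(T)$ acting on $T_c$ as graph automorphisms. Since $T_c$ has a unique center vertex (as noted in the proof of Lemma \ref{l-diss}), this center is fixed by $\mathrm{Aut}(T)$; moreover, no edge of $T_c$ can be a symmetry edge, as its two endpoints are always of different types (one colored and one black, distinguishable by degree). Hence the Otter pairing $v \mapsto {}$(first edge on the path from $v$ to the center) is an $\mathrm{Aut}(T)$-equivariant bijection between the non-center vertices of $T_c$ and its edges. Translating (colored vertices of $T_c$ are edges of $T$, black vertices of $T_c$ are triangles of $T$) and subtracting one for the orbit of the center, one obtains
\[
(\text{dissim.\ edges of } T) + (\text{dissim.\ triangles of } T) - 1 = (\text{dissim.\ edges of } T_c).
\]

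Next, I would interpret an edge of $T_c$ as an incidence pair (edge of $T$, triangle of $T$ containing it). For each $\mathrm{Aut}(T)$-orbit of triangles with representative $t$ and stabilizer $G_t$, the incidence pairs whose triangle lies in this orbit fall into as many $\mathrm{Aut}(T)$-orbits as the three sides of $t$ fall into $G_t$-orbits, which in turn depends only on the image $G_t|_t \le S_3$. A brief case analysis over the subgroups of $S_3$ shows that this count always exceeds the number of orbits of nonsymmetry directed orientations of $t$ by exactly $1$: the trivial subgroup gives $3$ side-orbits and $2$ nonsymmetry directed orbits; an order-two subgroup (a transposition) gives $2$ and $1$ (the transposition pairs the two cyclic orientations but does not make them symmetric); and any subgroup containing a $3$-cycle gives $1$ and $0$, since $t$ is then by definition a symmetry triangle. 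Summing over triangle orbits yields
\[
(\text{dissim.\ edges of } T_c) - (\text{dissim.\ triangles of } T) = (\text{dissim.\ nonsymmetry directed triangles of } T),
\]
and combining with the previous display gives the claim.

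The main obstacle is the local case analysis in the second step, where one must carefully match the four possible images $G_t|_t \le S_3$ against both the orbit structure on the three sides of $t$ and on the two cyclic orientations of $t$. This ``excess of one'' per triangle orbit is precisely what cancels the ``deficit of one'' from the center of the coding tree, and it is also the combinatorial reason why the \dc\ theorem for 2-trees must involve nonsymmetry \emph{directed} triangles rather than plain triangles.
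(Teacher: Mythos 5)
Your overall route is genuinely different from the paper's: the paper pairs each edge away from the center of the $2$-tree directly with a nonsymmetry directed triangle (via the alternating edge--triangle path to the center, which is an edge or a triangle) and finishes with a four-case analysis at the center, whereas you pass to the tree of fronts and hedra, apply Otter's vertex/edge pairing there, and then compare flags with directed triangles orbit by orbit. That structure is workable, with one small repair in the first step: $\mathrm{Aut}(T)$ does not act on the coding tree as such (it need not preserve colors or hedron labels), so you should work with the underlying uncolored front--hedron incidence tree. There the argument is fine: every triangle-vertex has degree $3$, so all leaves are edge-vertices, longest paths have even length, the center is a single vertex fixed by $\mathrm{Aut}(T)$, and your first displayed identity follows.

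The genuine gap is in the second step. The paper defines a symmetry triangle by an orbit condition on its three directed edges under the \emph{full} group $\mathrm{Aut}(T)$, while your case analysis decides symmetry from the image of the setwise stabilizer $G_t$ in the symmetric group on the vertices of $t$. The direction you use in the first two cases --- if no element of $G_t$ induces a $3$-cycle on $t$, then both cyclic orientations of $t$ are nonsymmetry --- is never justified: an automorphism $g$ with $g(a\to b)=(b\to c)$ maps the edge $ab$ to $bc$, but a priori it carries $t$ to some \emph{other} triangle containing $bc$, so it need not lie in $G_t$; your parenthetical remark about the transposition only rules out elements of the stabilizer, not such outside automorphisms. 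The gap can be closed inside your own framework: if all three directed sides of $\tau$ lie in one orbit, then the edge-vertices $ab$, $bc$, $ca$ are at equal distance from the fixed center of the incidence tree, and since a non-center vertex has exactly one neighbor strictly closer to the center, $t$ must be the common neighbor of $ab$, $bc$, $ca$ on their paths to the center (or be the center itself); hence $g$, which preserves the center and distances to it, sends $t$ (the neighbor of $ab$ toward the center) to the corresponding neighbor of $bc$, which is again $t$, so $g\in G_t$ and induces the $3$-cycle $(a\,b\,c)$. With that lemma supplied, your per-orbit counts $(3,2)$, $(2,1)$, $(1,0)$ are correct, and the two displayed identities combine to give the statement.
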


\begin{proof}
 Let $T$ be a 2-tree. Then $T$ has a center which is either an edge or a triangle and is fixed by every automorphism of $T$.  Let $e$ be an edge of $T$ that is neither a center edge nor an edge of a center triangle.  We will associate to~$e$ a nonsymmetry triangle of $T$.

 There is a ``path" from $e$ to the center passing alternatingly through edges and triangles  that starts at $e$, enters a triangle $\Delta$ containing $e$ and then visits another edge $f$ of  $\Delta$ (which may be the center).
 Then we associate to $e$ the triangle $\Delta$ oriented so that edge $e$ is followed by edge $f$.
 (See Figure \ref{f-2-diss} for an example in which the center is the edge $f$.)

 \begin{figure}[htbp] %  figure placement: here, top, bottom, or page
   \centering
   \newsavebox{\rooteda}
   \newsavebox{\rootedb}
   \newsavebox{\directeda}
   \newsavebox{\directedb}

   \begin{lrbox}{\rooteda}
     \begin{tikzpicture}
       \coordinate (a);
       \path [graphedge] (a) to ++(180:2) coordinate (b) to ++(180:2) coordinate (c) to ++(60:2) coordinate (d) to ++(60:2) coordinate (e) to ++(-60:2) coordinate (f) to (a) to ++(60:2) coordinate (g) to (f) to node [above,pos=0.6] {$f$} (b) to (d) to (f);
       \path [rootedge] (a) to node [below] {$e$} (b);
     \end{tikzpicture}
   \end{lrbox}

   \begin{lrbox}{\directeda}
     \begin{tikzpicture}
       \path [graphedge] (a) to ++(180:2) coordinate (b) to ++(180:2) coordinate (c) to ++(60:2) coordinate (d) to ++(60:2) coordinate (e) to ++(-60:2) coordinate (f) to (a) to ++(60:2) coordinate (g) to (f) to (b) to (d) to (f);
       \path [rootedge, ->-] (a) to (b);
       \path [rootedge, ->-] (b) to (f);
       \path [rootedge, ->-] (f) to (a);
     \end{tikzpicture}
   \end{lrbox}

   \begin{lrbox}{\rootedb}
     \begin{tikzpicture}
       \coordinate (a);
       \path [graphedge] (a) to ++(180:2) coordinate (b) to ++(180:2) coordinate (c) to ++(60:2) coordinate (d) to ++(60:2) coordinate (e) to ++(-60:2) coordinate (f) to (a) to ++(60:2) coordinate (g) to (f) to node [above,pos=0.6] {$f$} (b) to (d) to (f);
       \path [rootedge] (a) to node [above=3pt,pos=0.35] {$e$} (f);
     \end{tikzpicture}
   \end{lrbox}

   \begin{lrbox}{\directedb}
     \begin{tikzpicture}
       \path [graphedge] (a) to ++(180:2) coordinate (b) to ++(180:2) coordinate (c) to ++(60:2) coordinate (d) to ++(60:2) coordinate (e) to ++(-60:2) coordinate (f) to (a) to ++(60:2) coordinate (g) to (f) to (b) to (d) to (f);
       \path [rootedge, ->-] (b) to (a);
       \path [rootedge, ->-] (f) to (b);
       \path [rootedge, ->-] (a) to (f);
     \end{tikzpicture}
   \end{lrbox}

   \begin{tikzpicture}
     \node (rooted) at (0,0) {\usebox{\rooteda}};
     \node (directed) at (8,0) {\usebox{\directeda}};

     \draw [->, ultra thick] (rooted) to (directed);
   \end{tikzpicture}

   \begin{tikzpicture}
     \node (rooted) at (0,0) {\usebox{\rootedb}};
     \node (directed) at (8,0) {\usebox{\directedb}};

     \draw [->, ultra thick] (rooted) to (directed);
   \end{tikzpicture}

   \caption{Edges to directed triangles (with centers labeled $f$)}
   \label{f-2-diss}
 \end{figure}
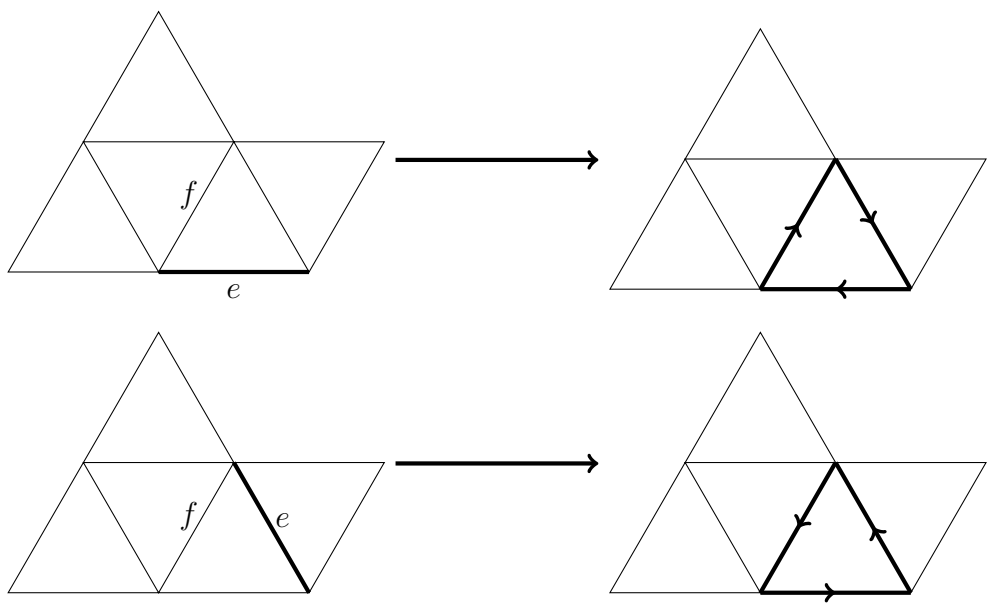
 The only unpaired edges and directed triangles  are center edges, edges of center triangles, and center triangles.
 We consider four  cases: (i) the center is an edge; (ii) the center is a triangle in which all three edges are similar; (iii) the center is a triangle with two dissimilar edges; and (iv) the center is a triangle with three dissimilar edges. In each case  the number of dissimilar unpaired edges is one more than the number of dissimilar unpaired nonsymmetry triangles:  In case (i) there is one  unpaired edge and no unpaired directed triangle; in case (ii) there are three unpaired edges, all of which are similar, and thus one dissimilar unpaired edge, and no unpaired nonsymmetry triangles; in case (iii) there are two dissimilar unpaired edges and two unpaired nonsymmetry triangles which are similar; and in case (iv) there are three dissimilar unpaired edges and two dissimilar unpaired directed triangles.
\end{proof}

% Not sure if the following works:
% We can restate Lemma \ref{e-2diss} in a somewhat more elegant form analogous to the \dc\ theorem for blocks of [...].
%
% Let $e^*$ be the number of dissimilar edges of a 2-tree and let $t^*$ be the number of dissimilar directed triangles. Let $e_i$ be the number of dissimilar (undirected) edges in the $i$th equivalence class of directed triangles. Then
% \begin{equation*}
%   e^* -1 =\sum_{i=1}^{t^*} (e_i^* -1).
% \end{equation*}
% (Is this true?)
%

Harary and Palmer \cite{h-p} counted 2-trees with the help of the following \dc\ theorem: In a 2-tree, let $q^*$ be the number of dissimilar edges, let $r^*$ be the number of dissimilar triangles, let $s_1$ be the number of dissimilar triangles with two similar edges, let $s_2$ be the number of dissimilar triangles with all three edges similar, and let $s=s_1+2s_2$. Then $q^* + s - 2r^* =1$.

It is not difficult to see that the number of dissimilar nonsymmetry triangles in a 2-tree is $2r^* -s$, so Harary and Palmer's \dc\ theorem is equivalent to ours. They applied their \dc\ theorem to derive a more complicated formula for the generating function for 2-trees:
\begin{equation*}
 L(x)+s_1(x)+2s_2(x) -2\triangle(x),
\end{equation*}
which is equal to our $U(x) -1$,
where in our notation,
$L(x)=C(x)-1$, $s_1(x)=x\bigl(C_{1,1}(x^2)C_2(x) - C_2(x^3)\bigr)$, $s_2(x) = xC(x^3)$,
and $\triangle(x)=\BB(x)$. They expressed these series in terms of auxiliary series $M_1(x)$, $N_1(x)$, $M(x)$, and $N(x)$, which in our notation are $M_1(x) = xC_{1,1}(x^2)$,
$N_1(x) =\tfrac12 x\bigl( C_{1,1}(x)^2 - C_{1,1}(x^2)\bigr)$,
$M(x) = C_2(x) -1$, and $N(x) = \tfrac12\bigl(C_{1,1}(x) - C_2(x)\bigr)$.

\bigskip

There are  formulas similar to \eqref{e-U1} and \eqref{e-U2} for all $k$, which we can find by expressing $U$ in terms of the $C_\mu$.
For $k=3$ the formula is
\begin{multline*}
 \quad U = C
 -x\bigl(\tfrac18C_{1,1,1}(x)^4 +\tfrac14C_{1,1,1}(x^2)C_{2,1}(x)^2 - \tfrac18 C_{1,1,1}(x^2)^2-\tfrac14 C_{1,1,1}(x^4)\bigr)
 \quad
\end{multline*}
and for $k=4$ it is
\begin{multline*}
 \qquad
 U=C
 -x\bigl(\tfrac1{30} C_{1,1,1,1}(x)^5 + \tfrac16 C_{1,1,1,1}(x^3) C_{3,1}(x)^2 +\tfrac16 C_{1,1,1,1}(x^2) C_{2,1,1}(x)^3 \\
 -\tfrac 16 C_{2,1,1}(x^3) C_{3,1}(x^2)
 -\tfrac 15 C_{1,1,1,1}(x^5)\bigr).
 \qquad
\end{multline*}
Although \eqref{e-U2}, as noted earlier, can be used to give a shorter proof of the formula for $2$-trees, it does not seem likely that these formulas can be used to simplify the counting of $k$-trees for $k>2$.

We note that a dissimilarity characteristic theorem for 3-trees, analogous to that of Harary and Palmer \cite{h-p} for 2-trees, has been given by Liang and Liu \cite{liang-liu}, but they did not use it to count 3-trees.
%
% \section{A note on the approach of \cite{spec}}
% \comment{Not sure if this is worth including. Probably not.}
% In the approach of \cite{spec}, an alternative formulation of Lemma \ref{l-ms} is used:
% Let $G=\{e, \tau\}$ be the two-element group where $e$ is the identity element. Then with the terminology of Lemma \ref{l-ms}, an element of $M(S)$  fixed by $\tau$ is a multiset union of elements of $S$ each of which is fixed by $\tau$ and sets $\{s, \tau(s)\}$ where $s$ is not fixed by $\tau$. Let $W_e$ be the sum of the weights of all elements of $S$ and let $W_\tau$ be the sum of the weights of all elements of $S$ fixed by $\tau$. Then the sum of the weights of all elements of $S$ not fixed by $\tau$ is $W_e- W_\tau$, and thus the sum of the weights of the sets $\{s, \tau(s)\}$ where $s$ is not fixed by $\tau$ is
% $\tfrac12 p_2[W_\tau-W_e]$. \dots

\section*{Acknowledgments}
The authors wish to thank two anonymous referees for their helpful suggestions.

\bibliographystyle{amsplain}
\bibliography{sources}

\end{document}